\documentclass[12pt]{article} 
\usepackage{subfiles}
\usepackage{amsmath,amssymb,amsthm}
\usepackage[left=3cm, right=3cm, top=1in, bottom=1in]{geometry}
\usepackage{framed}
\usepackage{mdframed}
\usepackage{graphicx}
\usepackage{bm}
\usepackage{theoremref}
\usepackage{enumerate}
\usepackage{enumitem}
\usepackage{fancyhdr}
\usepackage{color}
\usepackage{float}
\usepackage{dsfont}
\usepackage{mathtools}
\usepackage{breqn}
\usepackage{bbm}
\usepackage{mathrsfs}
\usepackage{amstext}
\usepackage{hyperref}
\usepackage{accents}
\usepackage{tikz}
\usepackage{caption}
\usepackage{subcaption}
\usepackage{setspace}
\usepackage{cases}

\hypersetup{
	colorlinks  = true,
	citecolor  = blue,
	linkcolor  = blue
}
\usepackage{pdfsync}
\usepackage{amsthm}
\usepackage{amssymb}
\usepackage{mathrsfs}

\usepackage{color}

\usepackage{verbatim}
\usepackage[utf8]{inputenc}
\usepackage[english]{babel}

\usepackage[numbers,comma,sort]{natbib}

\newcommand{\notinsubfile}[1]{}

\setlength{\parindent}{1.5em}

\numberwithin{equation}{section}
\newtheorem{theorem}{Theorem}[section]
\newtheorem{corollary}[theorem]{Corollary}
\newtheorem{lemma}[theorem]{Lemma}
\newtheorem{proposition}[theorem]{Proposition}

\theoremstyle{remark}
\newtheorem{definition}[theorem]{Definition}
\newtheorem{remark}[theorem]{Remark}
\newtheorem{example}[theorem]{Example}

\let\oldproofname=\proofname
\renewcommand{\proofname}{\rm\bf{\oldproofname}}


\newcommand{\mcC}{\mathcal{C}}

\newcommand{\mcF}{\mathcal{F}}

\newcommand{\mcP}{\mathcal{P}}

\newcommand{\mcS}{\mathcal{S}}

\newcommand{\indic}{\mathds{1}}

\newcommand{\mbE}{\mathbb{E}}

\newcommand{\mbN}{\mathbb{N}}

\newcommand{\mbP}{\mathbb{P}}

\newcommand{\mbR}{\mathbb{R}}

\newcommand{\mbT}{\mathbb{T}}

\newcommand{\mbZ}{\mathbb{Z}}





\newcommand{\Csigma}{C_{\bm{\sigma}}}


\newcommand{\RN}[1]{%
  \textup{\uppercase\expandafter{\romannumeral#1}}%
}

\newcommand{\supp}{\text{supp}}
\newcommand{\eps}{\varepsilon}
\newcommand{\dd}{\mathop{}\!\mathrm{d}}

\newcommand{\tzero}{|_{t=0}}

\newcommand{\blue}[1]{{\color{blue} #1}}

\long\def\avi#1{{\color{blue}Avi:\ #1}}

\newcommand{\vertiii}[1]{{\left\vert\kern-0.25ex\left\vert\kern-0.25ex\left\vert #1 
		\right\vert\kern-0.25ex\right\vert\kern-0.25ex\right\vert}}

\makeatletter
\newcommand{\customlabel}[2]{%
	\protected@write \@auxout {}{\string\newlabel {#1}{{#2}{\thepage}{#2}{#1}{}}}%
	\hypertarget{#1}{#2\hspace{-0.14cm}}
}
\makeatother

\author{Avi Mayorcas\thanks{Institute of Mathematics, Technische Universität Berlin, Straße des 17. Juni, 135, 10623, Berlin.\hfill ~ \hfill \newline \texttt{amayorcas@tu-berlin.de;} ORCID ID: \href{https://orcid.org/
0000-0003-4133-9740}{0000-0003-4133-9740}}\hspace{0.3em} and Milica Toma\v sevi\'c\thanks{CMAP, CNRS, École polytechnique, Institut Polytechnique de
Paris, 91120 Palaiseau, France. \hfill ~ \hfill \newline \texttt{milica.tomasevic@polytechnique.edu}.
}}

\title{Blow-up for a Stochastic Model of Chemotaxis Driven by Conservative Noise on $\mbR^2$}
\date{\today}
\pagestyle{plain}
\fancyhf{}
\makeatletter
\let\runauthor\@author
\let\runtitle\@title
\makeatother
\lhead{\runauthor}
\rhead{\runtitle}
\lfoot{\today}
\rfoot{Page \thepage}

\begin{document}
	\maketitle
		\begin{abstract}
			\par 
			
			We establish criteria on the chemotactic sensitivity $\chi$ for the non-existence 
			of global weak solutions (i.e. \textit{blow-up} in finite time) to a stochastic Keller--Segel model with spatially inhomogeneous, conservative noise on $\mbR^2$. We show that if $\chi$ is sufficiently large then \emph{blow-up} occurs with probability $1$. In this regime our criterion agrees with that of a deterministic Keller--Segel model with increased viscosity. However, for $\chi$ in an intermediate regime, determined by the variance of the initial data and the spatial correlation of the noise, we show that \emph{blow-up} occurs with positive probability.
			%
			
			
			

		\end{abstract}
		\small	
  \textit{Keywords:} blow-up criteria for SPDE; Keller--Segel equations of chemotaxis; SPDE with conservative noise;\\
  \textit{AMS 2020 Mathematics Subject Classification:} Primary: 60H15, 35R60 Secondary: 35B44; 35Q92; 92C17
		%
		%
		%
		%
%
		\section{Introduction}
		In this work, we present criteria for non-existence of global solutions (that we will frequently refer to as \textit{finite time blow-up}) to a stochastic partial differential equation (SPDE) model of chemotaxis on $\mbR^2$. The model we consider,
		\begin{equation}\label{eq:TurbulentParEllKS}
			\begin{cases} 
				\dd u_t = \left(\Delta u_t -\chi \nabla \cdot (u_t\nabla c_t)\right)\dd t  +  \sqrt{2\gamma} \sum_{k=1}^\infty \nabla \cdot \left(\sigma_k u_t \right)\circ d W^k_t,&\text{ on }\mbR_+\times \mbR^2 ,\\
				-\Delta c_t = u_t, &\text{ on } \mbR_+\times \mbR^2 ,\\
				u\tzero =u_0 \in \mcP(\mbR^2), &\text{ on }\mbR^2,
			\end{cases}
		\end{equation}
		is based on the parabolic-elliptic Patlak--Keller--Segel model of chemotaxis ($\gamma=0$) with the addition of a stochastic transport term ($\gamma>0$), where $\{W^k\}_{k\geq 1}$ is a family of i.i.d. standard Brownian motions on a filtered probability space, $(\Omega,\mcF,(\mcF_t)_{t\geq 0},\mbP)$, satisfying the usual assumptions. Here $\mcP(\mbR^2)$ denotes the set of probability measures on $\mbR^2$. We will give detailed assumptions on the vector fields $\sigma_k:\mbR^2\rightarrow \mbR^2$ below (see~\ref{ass:Noise1}-\ref{ass:Noise3}), but for now simply stipulate that they are assumed to be divergence free and such that $\bm{\sigma}:=\{\sigma_k\}_{k\geq 1}\in \ell^2(\mbZ;L^\infty(\mbR^2))$.
		\par
		The noiseless model ($\gamma=0$) is a well known PDE system modelling chemotaxis: the collective movement of a population of cells (represented by its time-space density $u$) in the presence of an attractive chemical substance (represented by its time-space concentration $c$). The chemical sensitivity is encoded by the parameter $\chi>0$. The main particularity of the model is that solutions may become unbounded in finite time even though the total mass is preserved. This is the so-called blow-up in finite time and it occurs depending on the spatial dimension of the problem and the size of the parameter $\chi$. In particular, on $\mbR^2$ blow-up occurs in finite time  for $\chi>8\pi$, at $t=\infty$ for $\chi=8\pi$ see \citet{blanchet_carrillo_masmoudi_2007} and global existence holds for $\chi<8\pi$, see for example the survey by \citet{perthame_04}. For results in other dimensions we refer to \cite{bossy_talay_96,osaki_yagi_01,hillen_potapov_04,perthame_04}.
		
		
		Since the scenario described by the noiseless model often occurs within an external environment, it is natural to take into account additional environmental effects. In
		some cases, this can be done by coupling additional equations into the system, such as the
		Navier--Stokes equations of fluid mechanics \cite{winkler_19_2d,winkler_19_3d,lorz_12}. With particular relevance to our work we note the results of \cite{kiselev_xu_16,iyer_xu_zlatos_20} where it was shown that transport by sufficiently strong \textit{relaxation enhancing flows} can have a regularising effect on the Keller--Segel equation. However, for both modelling and analysis purposes it
		is also relevant to study the effect of random environments. These either model a rough background, accumulated errors in measurement or emergent
		noise from micro-scale phenomena not explicitly considered.\par
		The noise introduced in \eqref{eq:TurbulentParEllKS} is related to stochastic models of turbulence, \cite{darling_92_isotropic,leJan_85_isotropic,kraichnan_68,chetrite_delannoy_gawcedzki_07_integrable} and we refer to the monograph by \citet{flandoli_11_pertubations} for a broader overview of its relevance to SPDE models. Noise satisfying either our assumptions, or closely related ones, has been applied in a number of related settings; interacting particle systems, \cite{coghi_flandoli_16,delarue_flandoli_vincenzi_13,flandoli_gubinelli_priola_11_euler}; regularisation, stabilisation and enhanced mixing of general parabolic and transport PDE, \cite{gess2021stabilization,flandoli_galeati_luo_21_mixing,flandoli_gubinelli_priola_10_wellposed}, and with particular applications to the Keller--Segel and Navier--Stokes equations amongst others in \cite{flandoli_galeati_luo_21_delayed,flandoli_luo_21_highmode, galeati_20_convergence}.\\
		
		The motivation of the present work is to understand the persistence of \textit{blow-up} in the case of stochastic chemotactic models driven by conservative noise. Our main result is that if $ \chi >\ (1+\gamma)8\pi $ then \textit{finite time blow-up} occurs almost surely, while if $ \chi >(1+ \gamma V[u_0]
		C_{\bm{\sigma}})8\pi$ then \textit{finite time blow-up} occurs with positive probability.  Here $V[u_0]$ denotes one half the spatial variance of the initial data and $C_{\bm{\sigma}}$ indicates a type of Lipschitz norm of the vector fields $\bm{\sigma}$ and measures the spatial decorrelation of the noise. We refer to \eqref{eq:CsigmaFinite} for a precise definition. Furthermore, if $\chi$ satisfies either of the above conditions and \textit{blow-up} does occur then it must do so before a deterministic time $T^\ast>0$, (see Theorem \ref{th:main-result}).
		Note that when $\gamma=0$, we recover the usual conditions for blow-up of the deterministic equation, see \cite{perthame_04}.\\ Three interesting regimes emerge from our criteria, on the one hand, if we let $C_{\bm{\sigma}}$ increase to infinity, the second condition becomes the first and \textit{blow-up} must occur almost surely, albeit for larger and larger $\chi$. On the other hand, when $C_{\bm{\sigma}}$ is arbitrarily small (which is the case for spatially homogeneous noise) one again recovers the deterministic criterion. However, in the third regime, where the noise and initial variance are reciprocally of the same order, i.e. $V[u_0] C_{\bm{\sigma}}<1$, we are only able to show \textit{blow-up} with positive probability. It is an interesting question, that we leave for future work, to obtain more information on the probability of \text{blow-up} in this case.  See remark \ref{rem:ConditionDiscussion} for a longer discussion of these points.
		
		The study of blow-up of solutions to SPDEs is a large topic of which we only mention some examples. It was shown by \citet{bonder_groisman_09} that additive noise can eliminate global well-posedness for stochastic reaction-diffusion equations, while a similar statement has been shown for both additive and multiplicative noise in the case of stochastic non-linear Schrödinger equations by \citet{deBouard_Debussche_02_BlowUp,deBoaurd_Debussche_05_BlowUpMult}. In addition, non-uniqueness results for stochastic fluid equations have been studied by~\citet{hofmanova_zhu_zhu_23_stoch_SNS}~and~\citet{romito_14_dyadic}.\par 
		In the case of SPDE models of chemotaxis the study of \textit{blow-up} phenomena has begun to be considered and we mention here two very recent works, by \citet{flandoli_galeati_luo_21_delayed} and \citet{misiats2021global}. In \cite{flandoli_galeati_luo_21_delayed} the authors show that under a particular choice of the vector fields, $\bm{\sigma}$, a similar model to \eqref{eq:TurbulentParEllKS} on $\mbT^d$ for $d=2,\,3$ enjoys delayed \textit{blow-up} with $1-\eps$ after choosing $\gamma$ and $\bm{\sigma}$ w.r.t. $\chi$ and $\eps \in (0,1)$. In \cite{misiats2021global} the authors study global well-posedness and blow-up of a conservative model similar to \eqref{eq:TurbulentParEllKS} with a constant family of vector fields $\sigma_k(x) = \sigma$ and a single common Brownian motion. Translating their parameters into ours, they establish global well-posedness of solutions to \eqref{eq:TurbulentParEllKS}, with $\sigma_k(x)\equiv 1$ and for $\chi <8\pi$, as well as finite time blow-up when $\chi>(1+\gamma)8\pi$.\par
		The main contribution of this paper is the above mentioned \textit{blow-up} criterion for an SPDE version of the Keller--Segel model in the case of a spatially inhomogeneous noise term. 
		 To the best of our knowledge, this is a new result. An interesting point is that, unlike the deterministic criterion, it relates the chemotactic sensitivity with the initial variance, regularity and intensity of the noise term. In addition, we close the gap in \cite{misiats2021global}, as in the case of constant vector fields we show that finite time \textit{blow-up} occurs for $\chi>8\pi$ (see Remark~\ref{rem:ConditionDiscussion}).  In addition, we show that $\chi>(1+\gamma)8\pi$ cannot be a sharp blow-up threshold for all sufficiently regular initial data.
		 
		 Our technique of proof follows the deterministic approach by tracking a priori the evolution in time of the spatial variance of solutions to \eqref{eq:TurbulentParEllKS}. We derive an SDE satisfied by this quantity which we analyse both pathwise and probabilistically to obtain criterion for blow-up.
		\paragraph{Notation}
\begin{itemize}[leftmargin=*]
	\item For $n\geq 1$ and $p\in [1,\infty)$ (resp. $p=\infty$) we write $L^p(\mbR^2;\mbR^n)$ for the spaces of $p$ integrable (resp. essentially bounded) $\mbR^n$ valued functions on $\mbR^2$.\\
	For $\alpha\in \mbR$ we write $H^\alpha(\mbR^2;\mbR^n)$ for the inhomogeneous Sobolev spaces of order $\alpha$ - a full definition and some useful facts are given in Appendix \ref{app:SobSpaces}. \\
	For $k\geq 0$ and $\alpha \in (0,1)$ we write $C^k(\mbR^2;\mbR^n)$ for the $k$ continuously differentiable maps and $\mcC^{k,\alpha}(\mbR^2;\mbR^n)$ for the $k$ continuously differentiable maps with $\alpha$ H\"older continuous $k^{\text{th}}$ derivatives. \\
	When the context is clear we remove notation for the target space, simply writing $L^p(\mbR^2)$, $H^\alpha(\mbR^2)$. We equip these spaces with the requisite norms writing $\|\,\cdot\,\|_{L^p},\, \|\,\cdot\,\|_{H^\alpha}$ removing the domain as well when it will not cause confusion.
	\item We write $\mcP(\mbR^2)$ for the space of probability measures on $\mbR^2$ and for $m\geq 1$, $\mcP_m(\mbR^2)$ for the space of probability measures with $m$ finite moments. By an abuse of notation we write, for example, $\mcP(\mbR^2)\cap L^p(\mbR^2)$ to indicate the space of probability measures with densities in $L^p(\mbR^2)$.
\item For $\mu \in \mcP(\mbR^2)$ and when they are finite we define the following quantities:
\begin{align*}
	C[\mu]&:= \int_{\mbR^2} x \dd \mu(x),\\
	V[\mu] &:= \frac{1}{2}\int_{\mbR^2}|x-C[\mu]|^2 \dd \mu(x) = \frac{1}{2}\int_{\mbR^2}|x|^2\dd\mu(x)- \frac{1}{2}|C[\mu]|^2.
	\end{align*}
	Note that $V[\mu]$ is one half the usual variance, we define it in this way for computational ease.
	\item For $T>0$, $X$ a Banach space and $p\in [1,\infty)$ (resp. $p=\infty$) we write ${L^p_TX:=L^p([0,T];X)}$ for the space of $p$-integrable (resp. essentially bounded) maps $f:[0,T]\rightarrow X$. Similarly we write $C_TX:=C([0,T];X)$ for the space of continuous maps ${f:[0,T]\rightarrow X}$, which we equip with the supremum norm ${\|f\|_{C_TX}:= \sup_{t\in[0,T]}\|f\|_X}$. We define the function space $S_T := C_TL^2(\mbR^2)\cap L^2_TH^1(\mbR^2)$.
	\item We write $\nabla$ for the usual gradient operator on Euclidean space while for $k\geq 2$, $\nabla^k$ denotes the matrix of $k$-fold derivatives. We denote the divergence operator by $\nabla \cdot $ and we write $\Delta:= \nabla \cdot \nabla$ for the Laplace operator.
	\item If we write $a\,\lesssim\, b$ we mean that the inequality holds up to a constant which we do not keep track of. Otherwise we write $a\,\leq\, C b$ for some $C >0$ which is allowed to vary from line to line.
 \item Given $a,\,b\in \mbR$ we write $a\wedge b \coloneqq \min \{a,b\}$ and $a\vee b \coloneqq \max\{a,b\}$.
	\end{itemize}
	\paragraph{Plan of the paper} In Section \ref{sec:main-res} we give the precise assumptions on the noise term and formulate our main result. Then, in Section~\ref{sec:prel} we establish some important properties of weak solutions to \eqref{eq:TurbulentParEllKS} which are made use of in Section~\ref{sec:proofs} where we prove our main theorem. Appendix~\ref{app:SobSpaces} is devoted to a brief recap of the fractional Sobolev spaces on $\mbR^2$ along with some useful properties. Appendix~\ref{app:StratToIto} gives a sketch proof for the equivalence between \eqref{eq:TurbulentParEllKS} and a comparable It\^o equation. Finally, in Appendix~\ref{app:LocalExist}, for the readers convenience, we provide a relatively detailed proof of local existence of weak solutions in the sense of Definition~\ref{def:StratSol} below. 
	\section{Main result}
	\label{sec:main-res}
	Before stating our main results we reformulate \eqref{eq:TurbulentParEllKS} into a closed form and state our standing assumptions on the noise.
	
	 It is classical that $c$ is uniquely defined up to a harmonic function, hence it can be written as  $c = K \ast u$ with $ K(x) = -\frac{1}{2\pi}\ln\left(|x|\right)$. Therefore, from now on, for $t>0$, we work with the expression
	\begin{equation}
		\label{eq:GreensKernels}
		\nabla c_t(x):= \nabla K \ast u_t (x) = -\frac{1}{2\pi} \int_{\mbR^2} \frac{x-y}{|x-y|^2} u_t(y)\dd y.
	\end{equation}

	Throughout we fix a complete, filtered, probability space, $(\Omega,\mcF,(\mcF_t)_{t\geq 0},\mbP)$, satisfying the usual assumptions and carrying a family of i.i.d Brownian motions $\{W^k\}_{k\geq 1}$. Furthermore, we consider a family of vector fields $\bm{\sigma}:= \{\sigma_k\}_{k\geq 1}$, satisfying the following assumptions.
	\begin{enumerate}
	\item[\customlabel{ass:Noise1}{{(H1)}}] For $k\geq 1$, $\sigma_k:\mbR^2 \rightarrow \mbR^2$ are measurable and such that $	\sum_{k=1}^\infty \|\sigma_k\|_{L^\infty}^2 <\infty.$
	\item[\customlabel{ass:Noise2}{{(H2)}}] For every $k\geq 1$, $\sigma_k \in C^2(\mbR^2;\mbR^2)$ and $ \nabla \cdot \sigma_k =0.$
	\item[\customlabel{ass:Noise3}{{(H3)}}] Defining $q :\mbR^2\times \mbR^2 \rightarrow \mbR^2 \otimes \mbR^2$ by
			\begin{equation*}
				q^{ij}(x,y)= \sum_{k=1}^\infty\sigma_k^i(x)\sigma_k^j(y),\quad \forall \, i,j =1,\ldots,d, \, x,y\in \mbR^2;
			\end{equation*}
			\begin{enumerate}
				\item The mapping $(x,y)\mapsto q(x,y)=:Q(x-y)\in \mbR^2 \otimes \mbR^2$ depends only on the difference $x-y$. 
				\item $Q(0)=q(x,x)=\text{Id}$ for any $x \in \mbR^2$.
				\item We have $Q \in C^2(\mbR^2 ;\mbR^2\otimes \mbR^2)$ and $	\sup_{x\in \mbR^2} |\nabla^2Q(x)|<\infty.$ 
			\end{enumerate}
			%
		\end{enumerate}
		\begin{remark}\label{rem:NoiseLip}
			For $\bm{\sigma}$ satisfying Assumption \ref{ass:Noise3} it is possible to show that the quantity
			\begin{equation}\label{eq:CsigmaFinite}
				C_{\bm{\sigma}} := \sup_{x\neq y \in \mbR^2} \sum_{k=1}^\infty \frac{|\sigma_k(x)-\sigma_k(y)|^2}{|x-y|^2}
			\end{equation}
		is finite. See \cite[Rem. 4]{coghi_flandoli_16} for details. Note that due to \ref{ass:Noise3}-(b) one cannot re-scale $\bm{\sigma}$ so as to remove $\gamma$ from \eqref{eq:TurbulentParEllKS}.
		\end{remark}
		\begin{remark}\label{rem:covariance_def}
			It is important to note that one can instead specify the covariance matrix $Q$ first. In fact, due to \cite[Thm. 4.2.5]{kunita_97_stochasticFlows} any matrix valued map $Q:\mbR^{2}\times \mbR^2\rightarrow \mbR^{2}\otimes \mbR^2$ satisfying the analogue of \eqref{eq:CsigmaFinite},
			$$ \sup_{x\neq y \in \mbR^2}\sum_{i=1}^2 \frac{q^{ii}(x,x)-2q^{ii}(x,y)+ q^{ii}(y,y)}{|x-y|^2} <\infty$$
			can be expressed as a family of vector fields $\{\sigma_k\}_{k\geq 1}$ satisfying \ref{ass:Noise1}-\ref{ass:Noise3}. 
		\end{remark}
Analysis and presentations of vector fields satisfying these assumptions can be found in \cite{coghi_flandoli_16}, \cite[Sec. 5]{gess2021stabilization} and \cite{delarue_flandoli_vincenzi_13,flandoli_gubinelli_priola_11_euler}. For the reader's convenience we give an explicit example here in the spirit of Remark~\ref{rem:covariance_def}, based on \cite[Ex.~5]{coghi_flandoli_16}, but adapted to our precise setting.
\begin{example}\label{ex:covariance}
Let $f\in L^1(\mbR_+)$ be such that $\int_{\mbR_+} rf(r) \dd r =\pi^{-1}$ and $\int_{\mbR^2}|u|^
2f(|u|)\dd u <\infty$. Then, let $\Pi: \mbR\rightarrow M_{2\times 2}(\mbR)$ be the $2\times 2$-matrix valued map defined by,
\begin{equation*}
    \Pi(u) = (1-p)\text{Id} + (2p-1) \frac{u\otimes u}{|u|^2}, \quad \text{for } p\in [0,1].
\end{equation*}
Then, we define the covariance function,
\begin{equation*}
    Q(z) := \int_{\mbR^2} \begin{pmatrix}
    \cos(u\cdot z)&-\sin(u\cdot z)\\
   \sin(u \cdot z)& \cos (u \cdot z)
    \end{pmatrix} \Pi(u) f(|u|) \dd u.
\end{equation*}
Property \ref{ass:Noise3} a) is satisfied by definition, after setting $q(x,y):= Q(x-y)$. Since 
$$Q(0)= \int_{\mbR^2} \Pi(u)f(|u|)\dd y,$$
property \ref{ass:Noise3} b) is easily checked by moving to polar coordinates, making use of elementary trigonometric identities and the normalisation $\int_{\mbR_+} r f(r)\dd r = \pi^{-1}$. Finally, \ref{ass:Noise3} c) can be checked by a straightforward computation using smoothness of the trigonometric functions and the moment assumption on $f$.
\end{example}
		We now define our notion of weak solutions.
\begin{definition}\label{def:StratSol}
			Let $\chi, \gamma>0$. Then, given $u_0 \in \mcP(\mbR^2)\cap L^2(\mbR^2)$, we say that a weak solution to \eqref{eq:TurbulentParEllKS} is a pair $(u,\bar{T})$ where
\begin{itemize}
    \item $\bar{T}$  is an $\{\mcF_t\}_{t\geq 0}$ stopping time taking values in $\mbR_+\cup \{\infty\}$,
    \item For $T< \bar{T}$, $u$ is an $S_T\coloneqq C_TL^2 \cap L^2_T H^1$ valued random variable such that
$$\mbE\left[\|u\|^2_{L^\infty_TL^1}+\|u\|^2_{L^\infty_T L^2} + \|u\|^2_{L^2_TH^1}\right]<\infty.$$
\end{itemize}
In addition, for any $t \in [0,T]$, $\phi\in H^1(\mbR^2)$, $\mbP$-a.s. the following identities hold,
\begin{equation}\label{eq:StratEquation}
\begin{aligned}
\langle u_t,\phi\rangle = & \langle u_0,\phi\rangle-\int_0^t \left( \langle \nabla u_s,\nabla \phi\rangle -\chi \langle u_s (\nabla K\ast u_s),\nabla \phi\rangle\right) \dd s \\
&- \sqrt{2 \gamma} \sum_{k\geq 1} \int_0^t 
\langle \sigma_k u_s,\nabla \phi\rangle \circ \dd W_s^k.\\
\end{aligned}
\end{equation}

			%
			%
			%
			%
		\end{definition}
		In Appendix \ref{app:LocalExist} we detail a standard argument to show that there exists a deterministic, positive time $T>0$ such that $(u,T)$ is a weak solution in the above sense. This is due to the particular structure of the noise and we stress that in general the maximal time of existence may be random.
		Applying the standard It\^o-Stratonovich correction one can prove the following remark, a sketch is given in Appendix \ref{app:StratToIto}.
		\begin{remark}\label{rem:StratToIto}
			Let $(u,\bar{T})$ be a solution, in the sense of Definition \ref{def:StratSol}, to \eqref{eq:TurbulentParEllKS}. Then it also holds that $(u,\bar{T})$ is a solution to the following It\^o equation: For every $\phi \in H^1(\mbR^2)$, $t\in [0,\bar{T}]$, $\mbP$-a.s.
		\begin{equation}\label{eq:ItoSol}
				\begin{aligned}
					\langle u_t,\phi\rangle = \,& \langle u_0,\phi\rangle -\int_0^t \left((1+\gamma )\langle \nabla u_t,\nabla \phi\rangle - \chi \langle u_s(\nabla K\ast u_s),\nabla \phi\rangle\right) \dd s\\
					&\blue{-}\sqrt{2\gamma} \sum_{k\geq 1} \int_0^t 
					\langle \sigma_k u_s,\nabla \phi\rangle \dd W_s^k,
				\end{aligned}
			\end{equation}
		\end{remark}
\begin{remark}
It follows from Definition \ref{def:StratSol} and the standard chain rule, obeyed by the Stratonovich integral, that for $u$ a weak, Stratonovich solution to \eqref{eq:TurbulentParEllKS} and $F \in C^3(L^2(\mbR^2);\mbR)$,
\begin{equation}\label{eq:StratChainRule}
\begin{aligned}
F[u_t] = &F[u_0] + \int_0^t DF[u_s][\Delta u_s -\chi \nabla \cdot(u_s\nabla K\ast u_s )]\dd s \\
&+ \sum_{k=1}^\infty \int_0^t DF[u_s][\nabla \cdot (\sigma_k u_s)]\circ \dd W^k_s,
\end{aligned}
\end{equation}
where $DF[u_s][\varphi]$ denotes the Gateaux derivative of $F[u_s]$ in the direction $\varphi \in H^{1}(\mbR^2)$. An equivalent It\^o formula for non-linear functional of \eqref{eq:ItoSol} also holds, see for example \cite[Sec.~2]{pardoux_21_SPDE_book}.
\end{remark}
		\begin{remark}
			Note that under assumption \ref{ass:Noise1}, for any $T>0$ and any weak solution on $[0,T]$, the stochastic integral is well defined as an element of $L^2(\Omega\times [0,T];L^2(\mbR^2))\subset L^2(\Omega\times [0,T];H^{-1}(\mbR^2))$, since for any $t\in (0,T]$, we have
			\begin{align*}
				\mbE\left[\sum_{k=1}^\infty \int_0^t \|\nabla \cdot (\sigma_k(x)u_s(x))\|^2_{L^2}\dd s\right] &=\mbE\left[\sum_{k=1}^\infty \int_0^t \| \sigma_k(x)\cdot \nabla  u_s(x)\|^2_{L^2}\dd s\right] \\
				&\,\leq\, \sum_{k=1}^\infty\|\sigma_k\|_{L^\infty}^2 \mbE\left[\int_0^t \|\nabla u_s\|^2_{L^2}\dd s\right] <\infty.
			\end{align*}
		\end{remark}
		We are ready to state our main result. 
		\begin{theorem}[\textit{Blow-up in finite time}]
			\label{th:main-result}
			Let $\chi,\,\gamma>0$ and let $u_0\in \mcP_{2}(\mbR^2)\cap L^2(\mbR^2)$ be such that $\int x u_0(x)\dd x =0$. Assume $\bm{\sigma}=\{\sigma_k\}_{k\geq 1}$ satisfy \ref{ass:Noise1}-\ref{ass:Noise3}. Let $(u,\bar T)$ be a weak solution to \eqref{eq:TurbulentParEllKS}. Then 
			%
			\begin{enumerate}[label=\roman*)]
			\item \label{it:AS_BlowUp}under the condition
		\begin{equation}\label{eq:BlowUpCond_almostsure}
				\chi > (1+\gamma)8\pi,
			\end{equation}
			 we have
			 $$\mbP(\bar T < T^\ast_1 )=1, $$
			for $T^\ast_1 := \frac{4\pi V[u_0]}{\chi-(1+\gamma)8\pi}$.
			    \item \label{it:Prob_BlowUp} under the condition
			    \begin{equation}\label{eq:BlowUpCond_positiveProb}
				\chi >(1+ \gamma V[u_0]
				C_{\bm{\sigma}})8\pi
			\end{equation}
			we have 
			$$\mbP(\bar T < T^\ast_2 )>0,$$
			for $T^\ast_2 := \frac{\log (\chi -8\pi) - \log\left(\chi -V[u_0]8\pi \gamma C_{\bm{\sigma}} -8\pi\right)}{2\gamma C_{\bm{\sigma}}}$.
			%
			
			\end{enumerate}
			%
			%
			%
			%
			%
			%
		\end{theorem}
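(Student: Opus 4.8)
**The plan is to derive a closed stochastic differential equation for $V[u_t]$ and then analyse it, respectively, pathwise and in expectation.**

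First I would compute the evolution of the spatial variance. Applying the chain rule \eqref{eq:StratChainRule} (or more conveniently the It\^o version of Remark~\ref{rem:StratToIto}) with the functional $F[u] = \frac12\int_{\mbR^2}|x|^2 u(x)\dd x$, one gets three contributions. The diffusion term $\Delta u$ yields $+2(1+\gamma)$ after integrating by parts twice (using total mass $1$); the chemotactic drift $-\chi\nabla\cdot(u\nabla K\ast u)$ is the classical computation that produces $-\frac{\chi}{4\pi}$ via the symmetrisation $\int\int \frac{(x-y)\cdot x}{|x-y|^2}u(x)u(y)\dd x\dd y = \frac12\int\int 1 \dd x\dd y = \frac12$; and the noise term $\sqrt{2\gamma}\sum_k \nabla\cdot(\sigma_k u)$ produces a genuinely stochastic term. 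Here the key point is that $F$ is quadratic, so its second Gateaux derivative is constant, and the It\^o correction $\gamma\sum_k\|\sigma_k\cdot\nabla(\cdot)\|$-type term combines with the first-order term. Using that the centre of mass $C[u_t]$ stays at $0$ (which itself needs a short argument: test \eqref{eq:ItoSol} against $\phi=x_i$, check the martingale part has zero quadratic variation contribution to the mean, or rather track $C[u_t]$ and show it solves a linear SDE started at $0$), and crucially using assumption \ref{ass:Noise3}(b), $Q(0)=\mathrm{Id}$, to evaluate the bracket of the martingale term, I expect to arrive at an SDE of the schematic form
\begin{equation*}
\dd V[u_t] = \left(2(1+\gamma) - \frac{\chi}{4\pi}\right)\dd t + (\text{martingale term bounded by } 2\gamma C_{\bm\sigma} V[u_t]),
\end{equation*}
where the drift coefficient is \emph{deterministic} and the martingale term has quadratic variation controlled by $C_{\bm\sigma}$ and $V[u_t]^2$ via \eqref{eq:CsigmaFinite}. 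I would establish this in the preliminary Section~\ref{sec:prel}.

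For part \ref{it:AS_BlowUp}, the argument is then purely pathwise: since $V[u_t]\geq 0$ always, and the drift is the constant $2(1+\gamma)-\frac{\chi}{4\pi} = \frac{1}{4\pi}\bigl((1+\gamma)8\pi - \chi\bigr) < 0$ under \eqref{eq:BlowUpCond_almostsure}, while the martingale term should be a true (local) martingale started at $0$ — and one must be a little careful, since it only makes sense up to $\bar T$; one takes expectations against a localising sequence or uses a supermartingale/comparison argument — the quantity $V[u_t]$ is dominated by $V[u_0] + \bigl(2(1+\gamma)-\frac{\chi}{4\pi}\bigr)t + M_t$ with $M$ a local martingale. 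If $\bar T \geq T^\ast_1$ on a set of positive probability, then on that set $V[u_t]$ would be forced negative before $T^\ast_1$ (after accounting for the martingale via e.g. an optional stopping / nonnegative supermartingale argument, or by noting the drift alone drives the expectation negative), contradicting $V\geq 0$. This gives $\mbP(\bar T < T^\ast_1)=1$ with $T^\ast_1 = \frac{V[u_0]}{\frac{\chi}{4\pi}-2(1+\gamma)} = \frac{4\pi V[u_0]}{\chi - (1+\gamma)8\pi}$.

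For part \ref{it:Prob_BlowUp}, the drift is not negative, so one argues probabilistically. The idea is that the \emph{martingale term can help}: on the event where the noise pushes $V$ down sufficiently, blow-up still occurs. Concretely, I would bound the martingale term's bracket from above by $(2\gamma C_{\bm\sigma})^2 V[u_t]^2\,\dd t$ (from \eqref{eq:CsigmaFinite}) and represent/compare $V[u_t]$ with a geometric-Brownian-motion-type process: the worst realisation of the noise behaves like $V[u_0]\exp(-2\gamma C_{\bm\sigma} t - (\text{martingale}))$, so $V$ is stochastically dominated by, or can be compared to, a process of the form $e^{-2\gamma C_{\bm\sigma} t}$ times something, plus the drift contribution. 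Running this comparison, $\mbE[V[u_t\wedge\bar T]]$ or a suitable path functional hits $0$ with positive probability by time $T^\ast_2$ precisely when $\chi - 8\pi > V[u_0]8\pi\gamma C_{\bm\sigma}$, which rearranges to \eqref{eq:BlowUpCond_positiveProb}, and solving the linear ODE for the relevant envelope gives $T^\ast_2 = \frac{1}{2\gamma C_{\bm\sigma}}\log\!\frac{\chi-8\pi}{\chi - V[u_0]8\pi\gamma C_{\bm\sigma}-8\pi}$.

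\textbf{The main obstacle} I anticipate is making the ``$V[u_t]$ would go negative, contradiction'' step rigorous when $\bar T$ is only a stopping time and the solution need not exist past it: one has to carefully interleave the a priori variance identity (valid only for $t<\bar T$) with a localisation argument, control the local martingale (show it is a genuine martingale on $[0,\bar T\wedge T]$, using the $L^2$-integrability in Definition~\ref{def:StratSol} and the bound $\sum_k\|\sigma_k\|_{L^\infty}^2<\infty$ together with finiteness of the second moment — this last is the delicate point, since $\mathcal P_2$-regularity must be propagated, or at least shown not to be instantly destroyed), and only then conclude. For part \ref{it:Prob_BlowUp} the additional subtlety is exhibiting an explicit event of positive probability on which the comparison process reaches $0$ by $T^\ast_2$; this should follow from the support theorem / nondegeneracy of Brownian motion driving the one-dimensional comparison SDE, but requires identifying that SDE cleanly and checking its diffusion coefficient is genuinely nondegenerate on the relevant range.
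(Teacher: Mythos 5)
Your derivation of the variance identity contains one error that is harmless for part \ref{it:AS_BlowUp} but fatal for part \ref{it:Prob_BlowUp}: the centre of mass does \emph{not} stay at $0$. Testing against $\phi=x_i$ gives $C[u_t]=-\sqrt{2\gamma}\sum_k\int_0^t\int\sigma_k(x)u_s(x)\dd x\,\dd W^k_s$, a genuinely non-trivial martingale (only its mean vanishes; in the constant-$\sigma$ case it is literally $-\sqrt{2\gamma}\sigma W_t$). Consequently the variance identity carries the extra term $-\tfrac12|C[u_t]|^2$:
\begin{equation*}
V[u_t]=V[u_0]+\Bigl(2(1+\gamma)-\tfrac{\chi}{4\pi}\Bigr)t-\tfrac12|C[u_t]|^2-\sqrt{2\gamma}\sum_{k\geq1}\int_0^t\int_{\mbR^2} x\cdot\sigma_k(x)u_s(x)\dd x\,\dd W^k_s .
\end{equation*}
For part \ref{it:AS_BlowUp} this term is nonpositive and only helps, so your expectation-versus-positivity contradiction goes through and matches the paper (which additionally runs a pathwise Dambis--Dubins--Schwarz argument on $\{\bar T=\infty\}$ before extracting $T^*_1$ from the expectation); your drift constant and value of $T^*_1$ are correct, and the localisation issue you flag is real but the same one the paper glosses over.

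For part \ref{it:Prob_BlowUp}, however, the term $-\tfrac12|C[u_t]|^2$ \emph{is} the mechanism, and your proposed substitute does not work. The paper takes expectations and bounds $\mbE[|C[u_t]|^2]$ from \emph{below} via the It\^o isometry, $\mbE[|C[u_t]|^2]=2\gamma\int_0^t\mbE[\iint\sum_k\sigma_k(x)\cdot\sigma_k(y)\,u_s(x)u_s(y)\dd x\dd y]\dd s$, then uses $\mathrm{Tr}\,Q(0)=2$ and \eqref{eq:CsigmaFinite} to get $\sum_k\sigma_k(x)\cdot\sigma_k(y)\geq 2-\tfrac12\Csigma|x-y|^2$ together with $\iint|x-y|^2u_s(x)u_s(y)\dd x\dd y=4V[u_s]$, yielding $\mbE[|C[u_t]|^2]\geq 4\gamma t-4\gamma\Csigma\int_0^t\mbE[V[u_s]]\dd s$. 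The $-2\gamma t$ this injects cancels the $+2\gamma$ in the drift, leaving $\mbE[V[u_t]]\leq V[u_0]+(2-\tfrac{\chi}{4\pi})t+2\gamma\Csigma\int_0^t\mbE[V[u_s]]\dd s$, and Gr\"onwall produces exactly the threshold \eqref{eq:BlowUpCond_positiveProb} and the time $T^*_2$. Your route instead claims the martingale in the $V$-equation has bracket controlled by $\Csigma^2V[u_t]^2$ and runs a geometric-Brownian-motion comparison; but that bracket is $2\gamma\sum_k|\int x\cdot\sigma_k(x)u_s(x)\dd x|^2\,\dd s$, which is of the order of $|C[u_s]|^2$ rather than $\Csigma^2V^2$ (for near-constant $\sigma_k$ one has $\Csigma\approx0$ while $\int x\cdot\sigma_k u_s=\sigma_k\cdot C[u_s]\neq0$), so neither the claimed domination nor the resulting constants are justified, and no nondegeneracy/support argument is needed in the correct proof. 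Reinstating $-\tfrac12|C[u_t]|^2$ and computing its expectation as above closes the gap; without it, part \ref{it:Prob_BlowUp} does not follow.
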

		\begin{remark}\label{rem:ConditionDiscussion}
		\begin{itemize}[leftmargin=*]
		    \item If $V[u_0]C_{\bm{\sigma}}>1$ and $\chi$ satisfies \eqref{eq:BlowUpCond_positiveProb} then $\chi$ also satisfies \eqref{eq:BlowUpCond_almostsure}, in which case blow-up occurs almost surely before $T^*_1$. This has relevance to the setting of \cite{flandoli_galeati_luo_21_delayed} in which a model similar to \eqref{eq:TurbulentParEllKS} is considered on $\mbT^d$ for $d=2,\,3$ where formally $C_{\bm{\sigma}}$ can be taken arbitrarily large.

		    \item In the case $\Csigma=0$, which corresponds to noise that is independent of the spatial variable, criterion \eqref{eq:BlowUpCond_positiveProb} becomes $\chi>8\pi$ which is exactly the criterion for blow-up of solutions to the deterministic PDE. Applying Theorem \ref{th:main-result} one would only recover blow-up with positive probability in this case. However, using the spatial independence of the noise we can instead implement a change of variables, setting $v(t,x):= u(t,x-\sqrt{2\gamma}\sigma W_t)$. It follows from the Leibniz rule that $v$ solves a deterministic version of the PDE with viscosity equal to one. Hence, it blows up in finite time with probability one for $\chi>8\pi$. Note that in \cite{misiats2021global} a similar model was treated, amongst others, with spatially homogeneous noise and positive probability of blow-up was shown only for $\chi >(1+\gamma)8\pi$. 
 	 \item Observe that the second half of Theorem \ref{th:main-result} demonstrates that \eqref{eq:BlowUpCond_almostsure} cannot be a sharp threshold for almost sure global well-posedness of \eqref{eq:TurbulentParEllKS} for all initial data (or all families of suitable vector fields $\{\sigma_k\}_{k\geq 1}$). Given any $8\pi<\chi <(1+\gamma)8\pi$, initial data $u_0$ (resp. family of vector fields $\{\sigma_k\}_{k\geq 1}$) one can always choose suitable vector fields (resp. initial data) such that $\chi >(1+\gamma V[u_0]\Csigma)8\pi$ so that there is at least a positive probability that solutions cannot live for all time. However, the results of this paper leave open any quantitative information on this probability.
		    %
		\end{itemize}
		\end{remark}
		\begin{remark}\label{rem:TimeDiscussion}
			If we set $T^\ast := T^\ast_1 \wedge T^\ast_2$ then it is possible to show that $T^*$ respects the ordering of $ V[u_0]C_{\bm{\sigma}}$ and $1$. That is,
			$$T^* = \begin{cases}
				\frac{\log (\chi -8\pi) - \log\left(\chi -V[u_0]8\pi \gamma C_{\bm{\sigma}} -8\pi\right)}{2\gamma C_{\bm{\sigma}}}, & V[u_0]C_{\bm{\sigma}}<1,\\
				\frac{4\pi V[u_0]}{\chi-(1+\gamma)8\pi}, & V[u_0]C_{\bm{\sigma}}>1.
			\end{cases}$$
			As mentioned before, in the PDE case blow-up occurs, for $\chi>8\pi$, and weak solutions cannot exist beyond $T^* = \frac{4\pi V[u_0]}{\chi-8\pi}$. It follows that in all parameter regions both the threshold for $\chi$ and definition of $T^*$ in Theorem \ref{th:main-result} agree with the equivalent quantities in the limit $\gamma \rightarrow 0$.
		\end{remark}
		The proof of Theorem~\ref{th:main-result} is completed in Section~\ref{sec:proofs} after establishing some preliminary results in Section~\ref{sec:prel}. The central point is to analyse an SDE satisfied by $t\mapsto V[u_t]$.
	\section{A Priori Properties of Weak Solutions}
	\label{sec:prel}
	The following lemma demonstrates that the expression $\nabla c_t := \nabla K \ast u_t$ is well defined Lebesgue almost everywhere.
	\begin{lemma}
		\label{lemma:bound-drift}
		Let $(u,\bar{T})$ be a weak solution to \eqref{eq:TurbulentParEllKS} in the sense of Definition~\ref{def:StratSol}. Then, there exists a $C>0$ such that for all $t\in (0,\bar{T}]$,
\begin{equation}
     \|\nabla c_t\|_{L^\infty}\leq C \|u_t\|^{\frac{1}{4}}_{L^1} \|u_t\|^{\frac{1}{2}}_{L^2}\|u_t\|^{\frac{1}{4}}_{H^1}.
\end{equation}

  %
	\end{lemma}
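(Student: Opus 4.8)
The plan is to estimate the singular integral
\[
  \nabla c_t(x) = -\frac{1}{2\pi}\int_{\mbR^2}\frac{x-y}{|x-y|^2}\,u_t(y)\dd y
\]
directly, by splitting the domain of integration at a radius $R>0$ to be optimised, producing first a bound in terms of $\|u_t\|_{L^1}$ and $\|u_t\|_{L^3}$, and then converting it into the claimed inequality via a Gagliardo--Nirenberg interpolation. Since $\left|\tfrac{x-y}{|x-y|^2}\right| = |x-y|^{-1}$, for every $x\in\mbR^2$ and $R>0$,
\[
  2\pi\,|\nabla c_t(x)| \;\leq\; \int_{|x-y|\leq R}\frac{|u_t(y)|}{|x-y|}\dd y \;+\; \int_{|x-y|>R}\frac{|u_t(y)|}{|x-y|}\dd y \;=:\; I_{\mathrm{near}}(x) + I_{\mathrm{far}}(x).
\]

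On the far region we bound $|x-y|^{-1}\leq R^{-1}$, so $I_{\mathrm{far}}(x)\leq R^{-1}\|u_t\|_{L^1}$. On the near region we apply Hölder's inequality with the conjugate exponents $3/2$ and $3$; since $z\mapsto|z|^{-3/2}$ is locally integrable on $\mbR^2$ with $\int_{|z|\leq R}|z|^{-3/2}\dd z = 4\pi\sqrt{R}$, this gives $I_{\mathrm{near}}(x)\leq (4\pi)^{2/3}R^{1/3}\|u_t\|_{L^3}$. Hence, for every $R>0$,
\[
  2\pi\,\|\nabla c_t\|_{L^\infty} \;\leq\; (4\pi)^{2/3}R^{1/3}\|u_t\|_{L^3} + R^{-1}\|u_t\|_{L^1}.
\]
If $\|u_t\|_{L^3}=0$ then $u_t=0$ a.e.\ and the lemma is trivial; otherwise, minimising the right-hand side over $R>0$ (the minimiser being of order $(\|u_t\|_{L^1}/\|u_t\|_{L^3})^{3/4}$) yields a universal constant $C$ with
\[
  \|\nabla c_t\|_{L^\infty}\;\leq\; C\,\|u_t\|_{L^1}^{1/4}\,\|u_t\|_{L^3}^{3/4}.
\]

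It remains to control $\|u_t\|_{L^3}$. For a.e.\ $t\in(0,\bar T]$ one has $u_t\in H^1(\mbR^2)$ by Definition~\ref{def:StratSol} (at the remaining times the asserted inequality holds trivially, its right-hand side being infinite), and the Gagliardo--Nirenberg inequality on $\mbR^2$ gives $\|u_t\|_{L^3}\leq C\|u_t\|_{L^2}^{2/3}\|\nabla u_t\|_{L^2}^{1/3}\leq C\|u_t\|_{L^2}^{2/3}\|u_t\|_{H^1}^{1/3}$. Substituting this into the previous display produces
\[
  \|\nabla c_t\|_{L^\infty}\;\leq\; C\,\|u_t\|_{L^1}^{1/4}\,\|u_t\|_{L^2}^{1/2}\,\|u_t\|_{H^1}^{1/4},
\]
which is the claim. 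The computation is elementary; the only choices that matter are the splitting exponent $p=3$, forced by the target exponents $(\tfrac14,\tfrac12,\tfrac14)$, and the scaling optimisation in $R$, so there is no substantial obstacle here.
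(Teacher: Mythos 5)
Your proof is correct. It follows the same two-step outline as the paper's argument — first the bound $\|\nabla c_t\|_{L^\infty}\lesssim \|u_t\|_{L^1}^{1/4}\|u_t\|_{L^3}^{3/4}$, then control of $\|u_t\|_{L^3}$ — but both steps are executed differently. For the first step the paper simply cites a lemma of Nagai (with $q=3$), whereas you reprove it from scratch via the near/far splitting of the kernel and an optimisation in $R$; your computation is accurate and makes the argument self-contained. For the second step the paper interpolates $\|u\|_{L^3}\leq\|u\|_{L^2}^{2/3}\|u\|_{L^\infty}^{1/3}$ and then invokes the embedding $H^1(\mbR^2)\hookrightarrow \mcC^{0,0}(\mbR^2)$, while you use the Gagliardo--Nirenberg inequality $\|u\|_{L^3}\lesssim\|u\|_{L^2}^{2/3}\|\nabla u\|_{L^2}^{1/3}$ directly. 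Your route is arguably the more robust of the two: the borderline embedding $H^1(\mbR^2)\hookrightarrow L^\infty(\mbR^2)$ that the paper leans on is the critical (in fact failing) endpoint of the Sobolev embedding in two dimensions, whereas Gagliardo--Nirenberg at $p=3$ is entirely unproblematic and yields exactly the exponents $(\tfrac14,\tfrac12,\tfrac14)$ claimed. Your remark that the inequality is vacuous at the null set of times where $u_t\notin H^1$ is a reasonable reading of the statement.
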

	\begin{proof}
First, applying \cite[Lem.~2.5]{nagai_11_global} with $q=3$ gives,
\begin{equation}\label{eq:nagain_bound_1}
    \|\nabla c_u\|_{L^\infty} \lesssim \|u\|^{\frac{1}{4}}_{L^1} \|u\|^{\frac{3}{4}}_{L^3}.
\end{equation}
Interpolation between $L^2(\mbR^2)$ and $L^\infty(\mbR^2)$ gives,
\begin{equation}\label{eq:nagain_bound_2}
    \|u\|_{L^3}\,\leq\, \|u\|^{\frac{2}{3}}_{L^2}\|u\|^{\frac{1}{3}}_{L^\infty}.
\end{equation}
Combined with the embedding $H^1(\mbR^2)\hookrightarrow \mcC^{0,0}(\mbR^2)$ (see Lemma~\ref{lem:SobEmbeddings}) the required estimate is obtained.

	\end{proof} 
\begin{remark}
    Note that the choice of $q=3$ in the proof of Lemma~\ref{lemma:bound-drift} and the resulting exponents are essentially arbitrary, the only restriction being that a non-zero power of $\|u_t\|_{L^p}$ for some $p \in [1,2)$ must be included in the right hand side. The choice of $L^1$ is convenient since we will shortly demonstrate that $\frac{\dd}{\dd t}\|u_t\|_{L^1}=0$ for all weak solutions.
\end{remark}	

 \begin{remark}\label{rem:ImprovedStratEq}
		Exploiting symmetries of the kernel $K$, \eqref{eq:GreensKernels} and following \cite{senba_suzuki_2002_weak}, we can write the advection term of \eqref{eq:StratEquation} in a different form that will become useful later on. We note that,
		\begin{equation}
			\label{eq:drift1}
			\langle u_s\nabla c_s,\nabla \phi\rangle = \iint_{\mbR^{4}} u_s(x)\nabla_xK(x-y)\cdot \nabla \phi(x)u_s(y) \dd y\dd x.
		\end{equation}
		Renaming the dummy variables in the double integral and applying Fubini's theorem, we also have
		\begin{equation}
			\label{eq:drift2}
			\langle u_s\nabla c_s,\nabla \phi\rangle = \iint_{\mbR^{4}} u_s(y)\nabla_yK(y-x)\cdot \nabla \phi(y)u_s(x) \dd y\dd x.
		\end{equation}
		Combining \eqref{eq:drift1} and \eqref{eq:drift2} gives
		\begin{equation*}
			\langle u_s\nabla c_s,\nabla \phi\rangle = \frac{1}{2} \iint_{\mbR^{4}} u_s(x) u_s(y) (\nabla_xK(x-y)\cdot \nabla \phi(x) + \nabla_yK(y-x)\cdot \nabla \phi(y))\dd y\dd x.  \end{equation*}
		Therefore, in view of \eqref{eq:GreensKernels} we may re-write $\langle u_s\nabla c_s,\nabla \phi\rangle$ as 
		\begin{equation}\label{eq:ImprovedStratEq}
			\begin{aligned}
				\langle u_s\nabla c_s,\nabla \phi\rangle = & 
				-\frac{1}{ 4\pi} \iint_{\mbR^{4}} \frac{(\nabla \phi(x)-\nabla \phi(y))\cdot (x-y)}{|x-y|^{2}}u_s(x)u_s(y) \dd y \dd x 
			\end{aligned}
		\end{equation}
	\end{remark}
In order to prove our main result we will need to manipulate the zeroth, first and second moments of weak solutions. To do so we define a family of radial, cut-off functions, indexed by $\varepsilon \in (0,1)$ such that for some $C>0$
\begin{equation}\label{eq:CutOff}
\Psi_\eps(x) = \begin{cases}
	1, & \text{ for } |x|< \eps^{-1},\\
	0, & \text{ for } |x|>2\eps^{-1},
\end{cases}
\quad \|\nabla \Psi_\eps\|_{L^\infty}\,\leq\, C \eps, \quad \|\nabla^2 \Psi_\eps\|_{L^\infty} \,\leq\, C\eps^2.
\end{equation}
For any family of cut-off functions satisfying \eqref{eq:CutOff}, it is straightforward to show that there exists a $C>0$ such that
\begin{equation}
\label{eq:cutoff-sups}
\begin{aligned}
	\sup_{\substack{x\in \mbR^2\\\eps \in (0,1)}}|\nabla^2(x\Psi_\eps(x))|\,\,\leq\, C, \quad \sup_{\substack{x\in \mbR^2\\\eps \in (0,1)}}|\nabla^2(|x|^2\Psi_\eps(x))|\,\,\leq\, C.
\end{aligned}
\end{equation}
Note also that since $\supp (\nabla \Psi_\eps) = \supp (\Delta \Psi_\eps) = B_{2\eps^{-1}}(0)\setminus B_{\eps^{-1}}(0)$, then 
\begin{equation}
\label{eq:cut-off-normL2}
\|\nabla \Psi_\eps\|_{L^2}\,\leq\, C \eps^{1/2} \quad \text{ and } \quad
\|\Delta \Psi_\eps\|_{L^2} \,\leq\, C \eps^{3/2}.
\end{equation}
We start with sign and mass preservation.
\begin{proposition} \label{prop:sign-mass-preservation}
Let $(u,\bar{T})$ a weak solution to \eqref{eq:TurbulentParEllKS}. If $u_0\geq 0$ then $\mbP$-a.s.
\begin{enumerate}[label=\roman*)]
	\item $u_t\geq 0$ for all $t\in [0,\bar{T})$,\label{it:SignPreserve}
	\item $\|u_t\|_{L^1}= \|u_0\|_{L^1} =1$ for all $t\in [0,\bar{T})$. \label{it:L1Preserve}
\end{enumerate}
\end{proposition}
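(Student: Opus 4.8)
\emph{Part \ref{it:SignPreserve} (sign preservation).} The plan is a localised entropy/convexity estimate. I would fix a smooth convex function $\beta:\mbR\to[0,\infty)$ vanishing exactly on $[0,\infty)$, with $\beta'$ bounded and $\beta''$ compactly supported (e.g.\ a smoothing of $r\mapsto(\min\{r,0\})^2$ made affine for $r\le-1$), and for $\eps\in(0,1)$ set $F_\eps[v]:=\int_{\mbR^2}\beta(v)\,\Psi_\eps\dd x$ with $\Psi_\eps$ as in \eqref{eq:CutOff}. Since $\Psi_\eps$ has compact support and $|\beta(r)|\le C|r|$, $F_\eps$ is a well-defined $C^2$ functional on $L^2(\mbR^2)$, so the first step is to apply the It\^o formula for nonlinear functionals of \eqref{eq:ItoSol} to $t\mapsto F_\eps[u_t]$ (localising in time by stopping times if $\bar T$ is not deterministic). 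The structural heart of the estimate is that the It\^o correction produced by the conservative noise equals $\gamma\int\beta''(u_s)\sum_k(\sigma_k\!\cdot\!\nabla u_s)^2\,\Psi_\eps\dd x=\gamma\int\beta''(u_s)|\nabla u_s|^2\,\Psi_\eps\dd x$, using $\nabla\!\cdot\!\sigma_k=0$ (\ref{ass:Noise2}) and $\sum_k\sigma_k^i(x)\sigma_k^j(x)=\delta^{ij}$ (\ref{ass:Noise3}-(b)); combined with the $-(1+\gamma)\int\beta''(u_s)|\nabla u_s|^2\Psi_\eps$ coming from integrating $(1+\gamma)\Delta u_s$ by parts, this leaves the dissipative $-\int\beta''(u_s)|\nabla u_s|^2\Psi_\eps\le0$ — i.e.\ the Stratonovich transport noise contributes exactly one extra unit of viscosity.

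Next, for the chemotactic drift I would introduce $\Theta(r):=\int_0^r s\,\beta''(s)\dd s$; two integrations by parts together with $-\Delta c_s=u_s$ convert $-\chi\int\beta'(u_s)\Psi_\eps\,\nabla\!\cdot\!(u_s\nabla c_s)$ into $\chi\int\Theta(u_s)u_s\,\Psi_\eps$ plus cut-off remainders, and convexity of $\beta$ with $\beta\equiv0$ on $[0,\infty)$ forces $\Theta\ge0$ and $r\Theta(r)\le0$ for all $r$, so $\chi\int\Theta(u_s)u_s\Psi_\eps\le0$. Hence $F_\eps[u_t]\le F_\eps[u_0]+R_\eps(t)+M_\eps(t)$, where $M_\eps$ is — after one further integration by parts using $\nabla\!\cdot\!\sigma_k=0$ — a genuine martingale with quadratic variation $\lesssim\eps\int_0^t\|\beta(u_s)\|_{L^2}^2\dd s$, and $R_\eps(t)$ collects the cut-off remainders, each controlled by $\|\nabla\Psi_\eps\|_{L^2}\lesssim\eps^{1/2}$, $\|\nabla\Psi_\eps\|_{L^\infty}\lesssim\eps$, Lemma~\ref{lemma:bound-drift}, $|\beta'|\le C$, $|\Theta(r)|\le C|r|$ and the integrability of $u$ in $L^2_TH^1\cap L^\infty_TL^1$ from Definition~\ref{def:StratSol} (plus H\"older in time). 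Taking expectations removes $M_\eps$ and uses $\mbE[F_\eps[u_0]]=0$ (as $u_0\ge0$); letting $\eps\to0$, monotone convergence gives $\mbE[F_\eps[u_t]]\uparrow\mbE\big[\int_{\mbR^2}\beta(u_t)\dd x\big]$ and $\mbE[R_\eps(t)]\to0$, whence $\mbE[\int\beta(u_t)]\le0$, and since $\beta\ge0$ this forces $\beta(u_t)=0$ a.e., i.e.\ $u_t\ge0$ a.e., $\mbP$-a.s., for each fixed $t$; the $C_TL^2$-continuity of $u$ (over a countable dense set of times, extracting a.e.-convergent subsequences) then upgrades this to $u_t\ge0$ for all $t\in[0,\bar T)$, $\mbP$-a.s.

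\emph{Part \ref{it:L1Preserve} (mass preservation).} Here the plan is simply to test \eqref{eq:ItoSol} with $\phi=\Psi_\eps\in H^1(\mbR^2)$ and send $\eps\to0$. The diffusive term is $\lesssim\eps^{1/2}\|\nabla u_s\|_{L^2}$; the stochastic term has $L^2(\Omega)$-norm $\lesssim\eps\,\big(\mbE\int_0^T\|u_s\|_{L^1}^2\dd s\big)^{1/2}$; and for the chemotactic term the antisymmetrised form \eqref{eq:ImprovedStratEq} with $|\nabla\Psi_\eps(x)-\nabla\Psi_\eps(y)|\le\|\nabla^2\Psi_\eps\|_{L^\infty}|x-y|\lesssim\eps^2|x-y|$ yields $|\langle u_s\nabla c_s,\nabla\Psi_\eps\rangle|\lesssim\eps^2\|u_s\|_{L^1}^2$ — so all three terms vanish. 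Using $u_t\ge0$ from Part \ref{it:SignPreserve}, monotone convergence gives $\langle u_t,\Psi_\eps\rangle\to\|u_t\|_{L^1}$ and $\langle u_0,\Psi_\eps\rangle\to\|u_0\|_{L^1}=1$, so $\|u_t\|_{L^1}=1$ for each fixed $t$ (along a subsequence $\eps_n\to0$ to make the stochastic term converge a.s.), and then for all $t$ by continuity of $t\mapsto u_t$ in $L^2$ together with the uniform-in-$t$ bound on the second moments of $u_t$.

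\textbf{Expected main obstacle.} The only genuinely delicate point is the entropy inequality in Part \ref{it:SignPreserve}: one must recognise that the It\^o correction of the conservative (Stratonovich, transport-type) noise is exactly $+\gamma\int\beta''(u)|\nabla u|^2\Psi_\eps$ — this is precisely where the normalisation $Q(0)=\mathrm{Id}$ of \ref{ass:Noise3}-(b) enters — and then verify that it assembles with the enhanced viscosity $(1+\gamma)\Delta$ and, through the sign of $r\mapsto r\Theta(r)$, with the aggregation term, so that every non-remainder, non-martingale contribution carries the favourable sign before the limit $\eps\to0$ can be taken. The rest (estimating the cut-off remainders, the time-localisation around $\bar T$, and the various passages to the limit) is routine.
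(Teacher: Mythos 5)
Your proposal is correct, and for part \ref{it:SignPreserve} it takes a genuinely different route from the paper. The paper tracks $S[u_t]=\|u_t^-\|_{L^2}^2$ with the \emph{Stratonovich} chain rule: there the noise contribution vanishes outright (since $\int_{\{u_s<0\}}\nabla\cdot(u_s^2\sigma_k)\dd x=0$ by \ref{ass:Noise2}), no sign is extracted from the chemotactic term — it is simply absorbed into the dissipation by Young's inequality and the $L^\infty$ bound on $\nabla c$ from Lemma~\ref{lemma:bound-drift} — and one concludes $S[u_t]\le S[u_0]e^{Ct}=0$ by Gr\"onwall. Your renormalised-entropy argument instead works in the It\^o picture, where the key identity $\sum_k\sigma_k^i\sigma_k^j=\delta^{ij}$ makes the correction $+\gamma\int\beta''(u)|\nabla u|^2\Psi_\eps$ exactly offset the extra viscosity in $(1+\gamma)\Delta$, and the chemotactic term is given a genuine sign via $\Theta(r)=\int_0^r s\beta''(s)\dd s$ (indeed $\Theta\ge 0$ with support in $\{r<0\}$, so $\chi\int\Theta(u)u\,\Psi_\eps\le 0$); I checked the two integrations by parts against $-\Delta c=u$ and the sign bookkeeping, and they are right. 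What each buys: your version needs no Gr\"onwall factor and no a priori control of $\|\nabla c\|_{L^\infty}$ in the main term, and the smooth $\beta$ makes the functional-It\^o step cleaner than the paper's formal use of $r\mapsto(r^-)^2$ (which the paper itself flags as needing an approximation of the indicator); the price is that you must carry the cut-off remainders $R_\eps(t)$ and the exact cancellation of the It\^o correction, neither of which appears in the paper's Stratonovich computation. For part \ref{it:L1Preserve} your argument is essentially the paper's (test with $\Psi_\eps$, kill the three terms using \eqref{eq:cut-off-normL2} and the antisymmetrised form \eqref{eq:ImprovedStratEq}, pass to the limit); the only soft spot, shared with the paper, is the upgrade from ``for each fixed $t$, $\mbP$-a.s.'' to ``$\mbP$-a.s., for all $t$'', since $L^2$-continuity alone does not prevent mass escaping to infinity along a sequence $t_k\to t$ — one should either invoke tightness from the second-moment bound (taking care not to be circular with Proposition~\ref{prop:BddMoments}) or rerun the $\eps\to 0$ limit pathwise, as you indicate.
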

\begin{proof}
Let us define $$S[u_t]= \int u_t(x) u^-_t(x)\,\dd x = \|u_t^-\|^2_{L^2},$$ on $L^2(\mbR^2)$, where $u_t^- = u_t \indic_{\{u_t<0\}}$. The computations below can be properly justified by first defining an $H^1$ approximation of the indicator function, obtaining uniform bounds in the approximation parameter using that $u\in H^1$ and then passing to the limit using dominated convergence. For ease of exposition we work directly with $S[u_t]$ keeping these considerations in mind so that the following calculations should only be understood formally.

Applying \eqref{eq:StratChainRule} gives
\begin{equation}
	\label{eq:signStrat}
	\hspace{-1em}\begin{aligned}
		S[u_t] &= S[u_0] + 2\int_0^t \int_{\{u_s<0\}} \nabla u_s(x)\cdot \left(-\nabla u_s(x) +\chi \nabla c_s(x)u_s(x) \right)\,\dd x\,\dd s \\
		&\quad + \sqrt{2\gamma}\sum_{k=1}^\infty\int_0^t \int_{\{ u_s<0 \}}u_s(x)\nabla \cdot (\sigma_ku_s(x))\,\dd x \circ d W^k_s.
	\end{aligned}
\end{equation}
Regarding the stochastic integral term, using that $\nabla \cdot \sigma_k =0$, $u_s \big|_{\partial \{u_s<0 \}} =0$ and integrating by parts, we have 
\begin{align*}
	\int_{\{ u_s<0 \}}u_s(x)\nabla \cdot (\sigma_ku_s(x))\,\dd x =-\frac{1}{2}\int_{\{ u_s<0 \}}\nabla \cdot (u^2_s(x) \sigma_k) \,\dd x =0.
\end{align*}
Regarding the finite variation integral,
\begin{align*}
	\int_0^t \int_{\{u_s<0\}} \nabla u_s(x)\cdot (-\nabla &u_s(x) +\chi \nabla c_s(x)u_s(x) )\,\dd x\,\dd s&\\
	&=-\int_0^t\int_{\{u_s<0\}} |\nabla u_s(x)|^2\dd x + \chi \int_0^t\int_{\{u_s<0\}} \nabla u_s(x)\cdot u_s(x)\nabla c_s(x)\dd x,
\end{align*}
we apply Young's inequality in the second term, to give
\begin{align*}
	\chi \int_{\{u_s<0\}} \nabla u_s(x)\cdot u_s(x)\nabla c_s(x)\dd x &\,\leq\, \frac{\chi}{2\eps} \int_{\{u_s<0\}} |\nabla u_s(x)|^2\dd x + \|\nabla c_s\|^2_{L^\infty} \frac{\eps \chi}{2}\int_{\{u_s<0\}} |u_s(x)|^2\dd x.
\end{align*}
So choosing $\eps= \frac{\chi}{4}$ we have,
\begin{align*}
	-\int_{\{u_s<0\}} |\nabla u_s(x)|^2\dd x + \chi \int_{\{u_s<0\}} &\nabla u_s(x)\cdot u_s(x)\nabla c_s(x)\dd x \\
	&\,\leq\, -\frac{1}{2}\int_{\{u_s<0\}} |\nabla u_s(x)|^2\dd x + \frac{\|\nabla c_s\|^2_{ L^\infty}\chi^2}{8} \int_{\{u_s<0\}} |u_s(x)|^2\dd x
\end{align*}
Putting all this together in \eqref{eq:signStrat} and using that $\nabla u_s \in L^2(\mbR^2)$ for almost every $s\in [0,\bar{T}]$,
\begin{align*}
	S[u_t] &\,\leq\, S[u_0] + \frac{\chi^2}{4} \int_0^t \|\nabla c_s\|^2_{ L^\infty}S[u_s]\dd s.
\end{align*}
So, having in mind Lemma~\ref{lemma:bound-drift} and applying Gr\"onwall's inequality, we almost surely have
\begin{equation*}
	S[u_t] \,\leq\, 
	S[u_0] \exp\left(\frac{C\chi^2}{4} \|u\|^{\frac{1}{4}}_{L^\infty_{\bar{T}}L^1} \|u\|^{\frac{3}{4}}_{L^2_{\bar{T}}H^1} T \right),
\end{equation*}
where $C$ is the constant from Lemma~\ref{lemma:bound-drift}. Since $S[u_0] =0$ it follows that $\mbP$-a.s. $S[u_t]=0$ for all $t\in [0,\bar{T})$ which shows the first claim.

To show the second claim, for $\eps \in (0,1)$, we define $M_\eps[u_t] := \int_{\mbR^2} \Psi_\eps(x)u_t(x)\dd x$, where the cut-off functions $\Psi_\eps$ are given in \eqref{eq:CutOff}. Using the weak form of the equation and integrating by parts where necessary we see that
\begin{equation}\label{eq:MassEpsFormula}
	\begin{aligned}
		M_\eps[u_t] &= M_\eps[u_0] + (1+\gamma)\int_0^t \int_{\mbR^2} \Delta \Psi_\eps(x) u_s(x)\dd x \dd s\\
		&\quad - \chi \int_0^t \int_{\mbR^2} \nabla \Psi_\eps(x) \cdot \nabla c_s(x)u_s(x)\dd x \dd s\\
		& \quad - \sqrt{2\gamma } \sum_{k=1}^\infty \int_0^t \int_{\mbR^2} \nabla \Psi_\eps (x) \cdot (u_s(x)\sigma_k(x))\dd x  \dd W^k_s.
	\end{aligned}
\end{equation}
Applying the Cauchy-Schwartz inequality, the fact that the Itô integral disappears under the expectation and in view of \eqref{eq:cut-off-normL2}, there exists a $C>0$ such that
\begin{align*}
	\mbE\left[\int_{\mbR^2}\Psi_\eps(x)u_t(x)\dd x\right] & \,\leq\, \int_{\mbR^2} \Psi_\eps(x)u_0(x)\dd x \\
	&\quad+(1+\gamma) C\eps^{3/2} \mbE\left[\|u\|_{L^\infty_TL^2}\right] + \chi C\eps^{1/2}\mbE\left[ \|\nabla c\|_{L^2_TL^\infty}\|u\|_{L^\infty_TL^2}\right].
\end{align*}
Applying Fatou's lemma,
\begin{align*}
	\mbE\left[\int_{\mbR^2} u_t(x) \dd x\right] &\,\leq\, \liminf_{\eps \rightarrow 0} \mbE\left[\int_{\mbR^2}\Psi_\eps(x)u_t(x)\,\dd x\right] \,\leq\, \int_{\mbR^2} u_0(x)\dd x.
\end{align*}
Hence $\int_{\mbR^2} u_t(x)\dd x<\infty$ $\mbP$-a.s. for every $t\in [0,\bar{T})$. We may now apply dominated convergence to each term in \eqref{eq:MassEpsFormula}. In particular, stochastic dominated convergence is used for the last term on the right hand side. Thus, to obtain almost sure convergence all the limits should be taken up to a suitable subsequence. Finally, noting that $\Delta \Psi_\eps$ and $\nabla \Psi_\eps$ converge to zero pointwise almost everywhere, we conclude
\begin{equation*}
	M[u_t] =\int_{\mbR^2}u_t(x)\dd x = \int_{\mbR^2} u_0(x)\dd x.
\end{equation*}
In combination with the first statement of the lemma this proves the second claim.
\end{proof}
The following corollary to Proposition~\ref{prop:sign-mass-preservation} will be crucial to obtaining our central contradiction in the proof of Theorem~\ref{th:main-result}.
\begin{corollary}\label{cor:PositiveVar}
Let $(u,\bar{T})$ be a weak solution to \eqref{eq:TurbulentParEllKS}. Then for any $f:\mbR^2\rightarrow \mbR$ such that $f>0$ Lebesgue almost everywhere and any $t\in [0,\bar{T})$,
\begin{equation*}
	\int_{\mbR^2} f(x)u_t(x)\dd x >0 \quad \mbP\text{-a.s.}
\end{equation*}
\end{corollary}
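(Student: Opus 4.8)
The statement is an almost immediate consequence of the sign and mass preservation established in Proposition~\ref{prop:sign-mass-preservation}, so the plan is short. Fix $t\in[0,\bar T)$. By Proposition~\ref{prop:sign-mass-preservation}\ref{it:SignPreserve}--\ref{it:L1Preserve}, on a set of full probability we have simultaneously $u_t(x)\geq 0$ for Lebesgue-a.e. $x$ and $\int_{\mbR^2}u_t(x)\dd x = 1$. Work on this full-probability event for the remainder of the argument.

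Since $f>0$ and $u_t\geq 0$ Lebesgue-a.e., the product $f u_t$ is a nonnegative measurable function, so $\int_{\mbR^2} f(x)u_t(x)\dd x$ is well defined as an element of $[0,\infty]$. The only case to rule out is that this integral equals $0$. If $\int_{\mbR^2} f(x)u_t(x)\dd x = 0$, then $f u_t = 0$ Lebesgue-a.e. on $\mbR^2$; combined with the fact that $\{f=0\}$ is a Lebesgue-null set, this forces $u_t=0$ Lebesgue-a.e., contradicting $\int_{\mbR^2}u_t(x)\dd x = 1$. Hence $\int_{\mbR^2}f(x)u_t(x)\dd x>0$ on the full-probability event, which is the claim.

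\emph{Main obstacle.} There is essentially none: the content is entirely carried by Proposition~\ref{prop:sign-mass-preservation}, and the only points requiring a word of care are (i) that $f u_t$ need not be integrable, which is harmless since a nonnegative integral lands in $(0,\infty]$ and any value there suffices, and (ii) that the a.s.\ statement is inherited verbatim from the a.s.\ nonnegativity and unit-mass properties of $u_t$ at the fixed time $t$. If one wanted the stronger conclusion that $\int f u_t>0$ for \emph{all} $t\in[0,\bar T)$ on a single full-probability event, one would additionally invoke the continuity $t\mapsto u_t\in L^2(\mbR^2)$ from Definition~\ref{def:StratSol} together with a countable-dense-times argument, but this is not needed for the statement as phrased.
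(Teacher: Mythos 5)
Your proposal is correct and follows essentially the same route as the paper: both arguments rest entirely on the sign and mass preservation of Proposition~\ref{prop:sign-mass-preservation} and derive a contradiction from assuming the integral vanishes. The only cosmetic difference is that you conclude directly that $fu_t=0$ a.e.\ forces $u_t=0$ a.e.\ (contradicting unit mass), whereas the paper first shows via H\"older's inequality that the support of $u_t$ has positive Lebesgue measure; the two are interchangeable.
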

\begin{proof}
We first show that any weak solution must have positive support. Let us fix an almost sure realisation of the solution, then chose any $t\in [0,\bar{T})$ and assume for a contradiction that, $u_t(\omega)$ is supported on a set of zero measure. However, since $\|u_t(\omega)\|_{L^1}=1$, we find that for any $p> 1$,
\begin{equation*}
	1= \int_{\mbR^2} u_t(x,\omega)\dd x \,\leq\, \left(\int_{\mbR^2}|u_t(x,\omega)|^p\dd x \right)^{\frac{1}{p}}\left(\int_{\supp(u_t(\omega))}1\,\dd x \right)^{\frac{p-1}{p}} = 0,
\end{equation*}
which is a contradiction. Since $f$ is assumed to be strictly positive, Lebesgue almost surely, the conclusion follows.
\end{proof}
In the following proposition, we derive the evolution for the center of mass and the variance of a weak solution to \eqref{eq:TurbulentParEllKS}.
\begin{proposition}\label{prop:BddMoments}
Let us assume that $u_0\in L^2(\mbR^2)\cap \mcP(\mbR^2)$ is such that
$V[u_0] <\infty$. Then for any weak solution to \eqref{eq:TurbulentParEllKS} in the sense of Definition \ref{def:StratSol}, $\mbP$-a.s. for any $t\in [0,\bar{T})$,
\begin{align}
	&C[u_t]= 
 -\sqrt{2\gamma} \sum_{k=1}^\infty \int_0^t \int_{\mbR^2} \sigma_k(x)u_s(x)\dd x \dd W^k_s, \label{eq:CoMIdentity}\\
	&\begin{aligned}V[u_t] =&V[u_0]+ \left(2 (1+\gamma)- \frac{\chi}{4\pi}\right)t - \frac{1}{2}|C[u_t]|^2\\
		&-\sqrt{2\gamma} \sum_{k\geq 1} \int_0^t \int_{\mbR^2} x \cdot \sigma_k(x)u_s(x)\dd x \dd W^k_s\end{aligned} \label{eq:VarIdentity}
\end{align}
\end{proposition}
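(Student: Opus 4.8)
The plan is to apply the Stratonovich chain rule \eqref{eq:StratChainRule} to the functionals $F_1[u] = \int x\,\Psi_\eps(x)\,u(x)\dd x$ (componentwise) and $F_2[u] = \int |x|^2\Psi_\eps(x)\,u(x)\dd x$, exactly as was done for $M_\eps$ in the proof of Proposition~\ref{prop:sign-mass-preservation}, and then pass to the limit $\eps\to 0$. The bounds \eqref{eq:cutoff-sups} are precisely what is needed to control the second-order (Laplacian) terms uniformly in $\eps$, since after integration by parts the diffusion contributes $\int \Delta(x_i\Psi_\eps)u_s\,\dd x$ and $\int \Delta(|x|^2\Psi_\eps)u_s\,\dd x$; note $\Delta(|x|^2) = 4$ in dimension two, which is the source of the $2(1+\gamma)t$ term once the It\^o formulation \eqref{eq:ItoSol} is used (giving the factor $(1+\gamma)$ in front of the Laplacian). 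The drift term involving $\nabla c_s$ is handled via the symmetrised representation \eqref{eq:ImprovedStratEq} from Remark~\ref{rem:ImprovedStratEq}: testing against $\phi$ with $\nabla\phi(x) = \nabla(|x|^2\Psi_\eps(x)) \to 2x$ gives, in the limit, $-\frac{1}{4\pi}\iint \frac{(x-y)\cdot(x-y)}{|x-y|^2}u_s(x)u_s(y)\,\dd y\,\dd x = -\frac{1}{4\pi}\|u_s\|_{L^1}^2 = -\frac{1}{4\pi}$, which produces the $-\frac{\chi}{4\pi}t$ term. For the center of mass, the analogous drift computation with $\nabla\phi \to e_i$ (a constant) gives zero by antisymmetry of $\nabla K$, and the Laplacian term vanishes since $\Delta(x_i) = 0$, leaving only the martingale part \eqref{eq:CoMIdentity}.

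For the martingale terms, under the It\^o formulation the stochastic integrals read $-\sqrt{2\gamma}\sum_k \int_0^t \langle \sigma_k u_s, \nabla\phi_\eps\rangle \dd W^k_s$ with $\nabla\phi_\eps \to e_i$ resp. $2x$; to pass to the limit one uses stochastic dominated convergence, as flagged in the proof of Proposition~\ref{prop:sign-mass-preservation}, after checking the requisite square-integrability. Here one must verify that $\mbE\int_0^t \sum_k |\langle \sigma_k u_s, x\Psi_\eps\rangle - \langle \sigma_k u_s, x\rangle|^2\dd s \to 0$; this requires $x\,u_s \in L^2$ uniformly, which is where the finite-variance hypothesis $V[u_0]<\infty$ combined with the a priori moment bounds enters — one first needs to know $\sup_{s\le t}\int |x|^2 u_s\,\dd x < \infty$ $\mbP$-a.s. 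The cleanest route is to establish \eqref{eq:CoMIdentity} and \eqref{eq:VarIdentity} first with the cut-off $\Psi_\eps$ in place (so all integrals are finite by compact support and $u\in S_T$), take expectations to get a uniform-in-$\eps$ bound on $\mbE[V_\eps[u_t]]$ via Gr\"onwall (using $|C_\eps[u_t]|^2 \ge 0$ can be dropped, and $\|\nabla c_s\|_{L^\infty}$ is controlled by Lemma~\ref{lemma:bound-drift}), then invoke Fatou to deduce $\mbE[\int|x|^2 u_t\,\dd x] \le V[u_0] + C t < \infty$, hence finiteness $\mbP$-a.s., and only then pass to the limit in the identity itself by dominated/stochastic-dominated convergence along a subsequence.

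The main obstacle I anticipate is the limit passage in the martingale term and the simultaneous control of the quadratic term $\frac12|C[u_t]|^2$: one cannot simply drop it when passing to the limit, and establishing $L^2(\Omega)$-convergence of $C_\eps[u_t] \to C[u_t]$ requires the second-moment bound just described, making the argument somewhat circular unless sequenced carefully (first cut-off identities + expectation + Gr\"onwall + Fatou for a priori moment bounds, then remove the cut-off). A secondary technical point is that $F_1, F_2$ with the cut-off are genuinely in $C^3(L^2;\mbR)$ (indeed linear, hence trivially smooth) so \eqref{eq:StratChainRule} applies directly, but the limiting functionals $\int x\,u$ and $\int|x|^2 u$ are not continuous on $L^2(\mbR^2)$ — this is exactly why the cut-off and a separate limiting argument are unavoidable rather than a cosmetic convenience. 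Once the moment bounds are in hand, the remaining convergences ($\Delta\Psi_\eps \to 0$, $\nabla\Psi_\eps \to 0$ pointwise a.e., $\nabla(x_i\Psi_\eps)\to e_i$, $\nabla(|x|^2\Psi_\eps) \to 2x$, all dominated by \eqref{eq:cutoff-sups}) are routine applications of dominated convergence, and the drift identity follows from Remark~\ref{rem:ImprovedStratEq} together with $\|u_s\|_{L^1}=1$ from Proposition~\ref{prop:sign-mass-preservation}.
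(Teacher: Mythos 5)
Your proposal follows essentially the same route as the paper: apply the It\^o form \eqref{eq:ItoSol} to the cut-off test functions $x^p\Psi_\eps$, use the symmetrised drift representation of Remark~\ref{rem:ImprovedStratEq} together with the uniform bounds \eqref{eq:cutoff-sups}, obtain the a priori second-moment bound by taking expectations and applying Fatou, and only then remove the cut-off by (stochastic) dominated convergence --- precisely the sequencing you identify as necessary (the paper gets the expectation bound directly from $\|u_s\|_{L^1}=1$ and the Lipschitz bound on $\nabla(x^p\Psi_\eps)$, so Gr\"onwall is not even needed). The only quibble is a harmless factor-of-two slip in your displayed drift computation: with $\nabla\phi\to 2x$ the limiting integrand is $\frac{2(x-y)\cdot(x-y)}{|x-y|^2}=2$, giving $-\frac{\chi}{2\pi}t$ for $\int|x|^2u_t\,\dd x$, which halves to the $-\frac{\chi}{4\pi}t$ appearing in $V[u_t]$; your stated final coefficient is nonetheless correct.
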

%
%
%
\begin{proof}
Without loss of generality we may assume  that $C[u_0] = \int_{\mbR^2}x u_0(x)\dd x =0$. Indeed, given a non-centred initial condition $\tilde{u}_0$ with $C[\tilde{u}_0]=c\neq 0$ one may redefine ${C[u_t]:= \int_{\mbR^2}(x-c)u_t(x)\dd x}$ whose evolution along weak solutions to \eqref{eq:TurbulentParEllKS} will again, using the argument given below, satisfy the identity \eqref{eq:CoMIdentity}.  The rest of our analysis therefore holds without further change.

Let $p\in \{1,2\}$ and we use the convention that for $p=2$, $x^p := |x|^2$. Since $x^p\Psi_{\eps}(x)$ is an $H^1(\mbR^2)$ function we may apply \eqref{eq:ItoSol} along with Remark~\ref{rem:ImprovedStratEq} and integrate by parts where necessary to give that
\begin{align}\label{eq:formula-x-p}
	\int x^p&\Psi_\eps(x)u_t(x)\dd x = \int_{\mbR^2} x^p\Psi_\eps(x)u_0(x)\dd x 
	+(1+\gamma)\int_0^t\int_{\mbR^2} \Delta (x^p\Psi_\eps(x))u_s(x)\dd x \dd s \nonumber \\
	&-\frac{\chi}{4\pi}\int_0^t \iint_{\mbR^4} \frac{(\nabla(x^p\Psi_\eps(x))-\nabla(y^p\Psi_\eps(y)))\cdot (x-y)}{|x-y|^2}u_s(x)u_s(y)\,\dd y\,\dd x \,\dd s \\
	& - \sqrt{2\gamma} \sum_{k\geq 1}\int_0^t \int_{\mbR^2} \nabla (x^p\Psi_\eps(x))\cdot \sigma_k(x)u_s(x)\dd x \dd W^k_s. \nonumber
\end{align}
From \eqref{eq:cutoff-sups} it follows that uniformly across $x\in \mbR^2$ and $\eps\in (0,1)$, $\Delta (x^p\Psi_\eps(x))$ is bounded and $\nabla (x^p\Psi_\eps(x))$ is Lipschitz continuous. Hence, using that $\|u_t\|_{L^1}=1$ for all $t\in[0,\bar{T}]$ there exists a $C>0$ such that, for all $\eps \in (0,1)$,
\begin{equation*}
	\mbE\left[\int_{\mbR^2}x^p\Psi_\eps(x)u_t(x)\,\dd x\right] \,\leq\, \int_{\mbR^2} x^p \Psi_\eps(x)u_0(x)\dd x + \left( (1+\gamma) + \frac{\chi}{4\pi} \right)tC.
\end{equation*}
Note that we may directly apply Lebesgue's dominated convergence to the initial data term, since $|x^p\Psi_\eps(x)u_0(x)|\,\leq\, |x^pu_0(x)|$ where the latter is assumed to be integrable. Now, let us for the moment take only $p=2$. Applying Fatou's lemma,
\begin{align*}
	\mbE\left[\int_{\mbR^2} |x|^2 u_t(x) \dd x\right] &\,\leq\, \liminf_{\eps \rightarrow 0} \mbE\left[\int_{\mbR^2}|x|^2\Psi_\eps(x)u_t(x)\,\dd x\right]\\ &\,\leq\, \int_{\mbR^2} |x|^2 u_0(x)\dd x + \left( (1+\gamma) + \frac{\chi}{4\pi} \right)tC<\infty.
\end{align*}
Hence $\int_{\mbR^2} |x|^2 u_t(x)\dd x<\infty$ $\mbP$-a.s. From Proposition \ref{prop:sign-mass-preservation}, $u_t$ is a probability measure on $\mbR^2$, so we have the bound
\begin{equation*}
	\int_{\mbR^2} |x| u_t(x)\dd x \,\leq\, \left(\int_{\mbR^2} |x|^2 u_t(x)\dd x\right)^{1/2}.
\end{equation*}
It follows that for $p\in \{1,2\}$, $\int_{\mbR^2} x^p u_t(x)\dd x<\infty$ $\mbP$-a.s. Since by definition we also have,
$$|x^p\Psi_\eps(x)u_t(x)|
\,\leq\, |x^pu_t(x)|,$$
as in the proof of Proposition~\ref{prop:sign-mass-preservation}, we may apply dominated convergence in each integral of~\eqref{eq:formula-x-p}. Using that for $p\in\{1,2\}$ and Lebesgue almost every $x,\,y\in \mbR^2$
\begin{align*}
	\lim_{\eps \rightarrow 0} \Delta (x^p\Psi_\eps(x)) &= 0, \\
	\lim_{\eps \rightarrow 0} \nabla (x^p\Psi_\eps(x)-y^p\Psi_{\eps}(y)) &= \begin{cases}
		0, & \text{ if } p= 1, \\
		2(x-y), & \text{ if } p=2,
	\end{cases} \\
	\lim_{\eps \rightarrow 0} \nabla (x^p\Psi_\eps(x))&=\begin{cases}
		1,&\text{ if } p=1, \\
		2x, & \text{ if } p=2.
	\end{cases}
\end{align*} 
we directly find the claimed identities for $C[u_t]$ and $\frac{1}{2}\int_{\mbR^2}|x|^2u_t(x)\dd x$. To conclude it only remains to note that
$$ V[u_t] = \frac{1}{2}
\int_{\mbR^2} |x|^2u_t(x)\dd x - \frac{1}{2}|C[u_t]|^2.$$
\end{proof}
\section{Proof of Theorem~\ref{th:main-result}}
\label{sec:proofs}
%
%
%
We will prove both statements by demonstrating that in each case the a priori properties of any weak solution proved in Proposition~\ref{prop:sign-mass-preservation} will be violated at some finite time, either almost surely or with positive probability. Furthermore, we will make use of the identities shown in Proposition~\ref{prop:BddMoments}. Notice that our proofs of both of these propositions rely heavily on assumptions~\ref{ass:Noise1}-\ref{ass:Noise3}.

To prove \ref{it:AS_BlowUp} let us assume that given $u_0 \in \mcP_2(\mbR^2)\cap L^2(\mbR^2)$ and any associated weak solution $(u,\bar{T})$ it holds that $\mbP(\bar{T}=\infty)>0$ and let us choose any $\omega \in \{\bar{T}=\infty\}$. We may in addition assume that $\omega$ is a member of the full measure set where the solution lies in $C_TL^2(\mbR^2)\cap L^2_TH^1(\mbR^2)$ for any $T>0$ and the set where the It\^o integral is well defined. Applying \eqref{eq:VarIdentity} of Proposition~\ref{prop:BddMoments}, for any $0<t<\infty$ and the above $\omega$ we have that
\begin{equation}\label{eq:Var_AS_Bnd}
\begin{aligned}
    V[u_t](\omega) 
    = & V[u_0] + \left(2(1+\gamma) - \frac{\chi}{4\pi}\right)t - \frac{1}{2}\left|C[u_t]\right|^2(\omega)\\
    &- \sqrt{2\gamma}\sum_{k\geq 1} \left(\int_0^t \int_{\mbR^2} x\cdot \sigma_k(x)u_s(x) \dd x \dd W_s^k\right)(\omega).
    \end{aligned} 
\end{equation}
Now since the stochastic integral is by definition a local martingale, by the Dambis--Dubin--Schwarz theorem \cite[Ch.~V. Thm. ~1.6]{revuz_yor_08}, there exists a random time change $t\mapsto q(t)$, with $q(t)$ being the quadratic variation of the local martingale at time $t$, and a real valued Brownian motion $B$ such that for all $t$ in the range of $q$,
\begin{equation*}
V[u_t](\omega)= V[u_0] + \left(2(1+\gamma) -\frac{\chi}{4\pi}\right)t -\frac{1}{2}|C[u_t]|^2(\omega)-\sqrt{2\gamma} B_{q(t)(\omega)}(\omega).
\end{equation*}
Either the range of $q$ is $[0,\infty)$ or $q$ is bounded. If $\omega$ is such that the first case holds, then there exists a $T>0$ such that $\sqrt{2\gamma}B_{q(T)(\omega)}(\omega)=V[u_0]$, at which point, since by assumption $\left(2(\gamma+1)-\frac{\chi}{4\pi}\right) T-\frac{1}{2}|C[u_t]|^2(\omega)<0$ for any $T>0$,  we have,
\begin{equation*}
    V[u_T](\omega)<0.
\end{equation*}
The latter is in contradiction with Corollary~\ref{cor:PositiveVar}. Alternatively, if $\omega$ is such that $t\mapsto q(t)(\omega)$ is bounded, then there exists a $B_\infty(\omega)\in \mbR$ such that $\lim_{t\nearrow \infty}B_{q(t)(\omega)}(\omega) = B_\infty(\omega)$, see cite[Ch.~5, Prop.~1.8]. In which case, for all $t>0$,
\begin{equation*}
    V[u_t](\omega) \,\leq\, V[u_0] +\left(2(1+\gamma) -\frac{\chi}{4\pi}\right)t -\frac{1}{2}|C[u_t]|^2(\omega)+\sqrt{2\gamma}\left(|B_\infty|(\omega) +  |B_\infty-B_{q(t)}|(\omega)\right).
\end{equation*}
Since the final term vanishes for large $t>0$ and using again the fact that $2(1+\gamma) -\frac{\chi}{4\pi}<0$ there exists a $T>0$ sufficiently large such that once more
\begin{equation*}
    V[u_T](\omega) <0.
\end{equation*}
Again this contradicts Corollary~\ref{cor:PositiveVar} and as such our initial assumption that $\mbP(\bar{T}=\infty)>0$ must be false. Hence, $\mbP(\bar{T}<\infty)=1$.

Finally, taking the expectation on both sides of \eqref{eq:Var_AS_Bnd} we see that for
$$t\geq T^*_1 :=  \frac{4\pi V[u_0]}{\chi-(1+\gamma)8\pi}$$
one has
\begin{equation*}
    \mbE[V[u_t]] \,\leq\, 0.
\end{equation*}
Since $V[u_t]$ is non-negative, we must in addition have $\mbP(\bar{T}< T^*_1) =1$.

To prove \ref{it:Prob_BlowUp} let us instead assume that given any suitable initial data, for the associated weak solution one has $\mbP(\bar{T}=\infty)=1$. Now taking expectations on both sides of \eqref{eq:VarIdentity} we have,
\begin{align}\label{eq:V[u]Identity}
\mbE\left[V[u_t]\right] &= V[u_0] +\left(2(1+\gamma) -\frac{\chi}{4\pi}\right)t - \frac{1}{2}\mbE\left[\left|C[u_t]\right|^2\right].
\end{align}
Using \eqref{eq:CoMIdentity} and It\^o's isometry we see that
\begin{align}
\mbE\left[\left|C[u_t]\right|^2\right] &= 2\gamma \sum_{k=1}^\infty \int_0^t\mbE \left[\left|\int \sigma_k(x) u_s(x)\dd x \right|^2\right]\dd s \nonumber\\
&=2\gamma  \int_0^t\mbE \left[\left(\iint \sum_{k=1}^\infty\sigma_k(x)\cdot \sigma_k(y) u_s(y) u_s(x)\dd x \dd y \right) \right]\dd s\label{eq:CoMMeanAndVar}
\end{align}
where we exchanged summation and expectation using dominated convergence and recalling that $u_s\in \mcP(\mbR^2)$ $\mbP$-a.s. for all $s\in [0,T]$ and applying~\ref{ass:Noise1}.

Now the game is to estimate $\sum_{k=1}^\infty\sigma_k(x)\cdot \sigma_k(y)$ from below, remembering  that $u_t\geq0$ $\mbP$-a.s. for all $t\in [0,T]$. From Remark~\ref{rem:NoiseLip},
\begin{equation*}
\sum_{k=1}^\infty |\sigma_k(x)-\sigma_k(y)|^2 \,\leq\, \Csigma |x-y|^2,\quad \text{ for all }x,\,y \in \mbR.
\end{equation*}
As a direct consequence and in view of \ref{ass:Noise3}-b)
\begin{align*}
C_{\bm{\sigma}} |x-y|^2 \geq \sum_{k=1}^\infty |\sigma_k(x)-\sigma_k(y)|^2 &= \sum_{k=1}^\infty [|\sigma_k(x)|^2 -2\sigma_k(x)\cdot \sigma_k(y)+ |\sigma_k(y)|^2] \\
&= 2\text{Tr}(Q(0))-2\sum_{k=1}^\infty \sigma_k(x)\cdot\sigma_k(y)\\
&= 4 -2\sum_{k=1}^\infty \sigma_k(x)\cdot\sigma_k(y).
\end{align*}
Rearranging the above inequality, gives
\begin{equation}\label{eq:SigDiagLowerBnd}
\sum_{k=1}^\infty \sigma_k(x)\cdot \sigma_k(y) \geq 2 - \frac{1}{2}\Csigma |x-y|^2.
\end{equation}
Plugging \eqref{eq:SigDiagLowerBnd} into \eqref{eq:CoMMeanAndVar} and the fact that $u_t\in \mcP(\mbR^2)$ $\mbP$-a.s. for all $t\in [0,T]$, we find
\begin{align*}
\mbE[|C[u_t]|^2] &\geq 4\gamma \int_0^t \mbE\left[\iint u_s(x)u_s(y)\dd x\dd y \right] \dd s - C_{\bm{\sigma}} \gamma \int_0^t \mbE\left[\iint |x-y|^2 u_s(x)u_s(y)\dd x \dd y \right]\dd s\\
&= 4\gamma t - C_{\bm{\sigma}} \gamma \int_0^t \mbE\left[\iint |x-y|^2 u_s(x)u_s(y)\dd x \dd y \right]\dd s.
\end{align*}
The integrand in the second term can easily be rewritten as
\begin{align*}
\iint |x-y|^2 u_s(y)u_s(x)\dd x \dd y &= 4V[u_s].
\end{align*}
Thus, we establish the lower bound
\begin{equation}\label{eq:CoMVarLwrBnd0}
\begin{aligned}
\mbE[|C[u_t]|^2] &\geq 4\gamma t - 4\gamma C_{\bm{\sigma}} \int_0^t\mbE[V[u_s]]\,\dd s.
\end{aligned}
\end{equation}
Therefore, inserting \eqref{eq:CoMVarLwrBnd0} into \eqref{eq:V[u]Identity}, we find
%
%
\begin{align*}
\mbE[ V[u_t]]&\,\leq\, V[u_0] + \left(2-\frac{\chi}{4\pi}\right)t + 2\gamma C_{\bm{\sigma}} \int_0^t \mbE[V[u_s]]\,\dd s.
\end{align*}
Applying Gr\"onwall lemma,
\begin{align*}
\mbE[V[u_t]] &\,\leq\, V[u_0] + \left(2-\frac{\chi}{4\pi}\right)t +2\gamma C_{\bm{\sigma}} \int_0^t \left(V[u_0] + \left(2-\frac{\chi}{4\pi}\right)s\right) e^{(t-s)2\gamma C_{\bm{\sigma}} }\,\dd s.
\end{align*}
Evaluating the integrals,
\begin{align*}
\mbE[V[u_t]]
\,\leq\, \left(V[u_0] - \frac{1}{2\gamma C_{\bm{\sigma}}} \left(\frac{\chi}{4\pi}-2\right)\right)e^{2\gamma C_{\bm{\sigma}} t} +\frac{1}{2\gamma C_{\bm{\sigma}}}\left(\frac{\chi}{4\pi}-2\right).
\end{align*}
Which implies that if $\chi >8\pi\left(1+\gamma C_{\bm{\sigma}} V[u_0]\right)$
and $t\geq T^*_2$, for 
$$ T^*_2 := \frac{1}{2\gamma C_{\bm{\sigma}}} \left(\log (\chi -8\pi) - \log\left(\chi -V[u_0]8\pi \gamma C_{\bm{\sigma}} -8\pi\right)\right),$$
then $$\mbE[V[u_t]]\,\leq\, 0.$$
This is again in contradiction with Corollary \ref{cor:PositiveVar} which shows $\mbP$-a.s. positivity of $V[u_t]$ for $u$ a weak solution to \eqref{eq:TurbulentParEllKS}. Hence, our initial assumption must have been false and so for any weak solution the probability of global existence must be strictly less than $1$. Furthermore, using again the fact that $V[u_t]$ is almost surely non-negative, we must in fact have, $\mbP(\bar{T}< T^\ast_2)>0$.

\hfill \qedsymbol{}

\section*{Appendix}
\appendix
\section{Sobolev Spaces on $\mbR^d$}\label{app:SobSpaces}
We include some useful definitions and lemmas concerning inhomogeneous Sobolev spaces on $\mbR^2$.
\begin{definition}
Let $\alpha \in \mbR$. The non-homogeneous Sobolev space $H^\alpha(\mbR^2)$ consists of the tempered distributions $u\in \mcS'(\mbR^2)$ such that $\hat{u}\in L^1_{\text{loc}}(\mbR^2)$ and 
\begin{equation*}
\|u\|_{H^\alpha} := \|\mcF^{-1}((1+|\,\cdot\,|^2)^{\alpha/2}\hat{u})\|_{L^2}\,<\,\infty,
\end{equation*}
where $\mcF^{-1}$ denotes the inverse Fourier transform.
\end{definition}
We recall that $H^\alpha(\mbR^2)$ is a Hilbert space for all $\alpha \in \mbR$ with inner products,
\begin{equation*}
\langle u,v\rangle_{H^\alpha} := \int_{\mbR^2} (1+|\xi|^2)^\alpha \hat{u}(\xi)\hat{v}(\xi)\dd \xi.
\end{equation*}
When $\alpha =1$ we have $\langle u,v\rangle_{H^1}= \langle u,v\rangle_{L^2}+\langle \nabla u,\nabla v\rangle_{L^2}$. It follows from the definition that for $\alpha_0<\alpha_1$,
\begin{equation}\label{eq:SobMonotone}
H^{\alpha_1}(\mbR^2)\hookrightarrow H^{\alpha_0}(\mbR^2).
\end{equation}
%
%
%



We also recall the following interpolation and embedding results.
\begin{lemma}[{\cite[Prop. 1.32 \& Prop.1.52]{bahouri_chemin_danchin_11}}]\label{lem:SobInterp}
For $\alpha_0 \,\leq\, \alpha \,\leq\, \alpha_1$, it holds that $H^{\alpha_0}\cap H^{\alpha_1}\hookrightarrow H^\alpha$. In particular, for all $\theta\in [0,1]$ and $\alpha = (1-\theta)\alpha_0 + \theta \alpha_1$, one has
\begin{equation}
\|u\|_{H^\alpha} \,\leq\, \|u\|^{1-\theta}_{H^{\alpha_0}}\|u\|^\theta_{H^{\alpha_1}}.
\end{equation}
\end{lemma}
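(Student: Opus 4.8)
The plan is to reduce the claim to a single application of H\"older's inequality on the Fourier side. Recall from the definition that $\norm{u}_{H^\alpha}^2 = \int_{\mbR^2}(1+|\xi|^2)^{\alpha}|\hat u(\xi)|^2\dd\xi$. The starting observation is that, because $\alpha = (1-\theta)\alpha_0 + \theta\alpha_1$, the Bessel weight factorises pointwise as
\[
(1+|\xi|^2)^{\alpha} = \big((1+|\xi|^2)^{\alpha_0}\big)^{1-\theta}\big((1+|\xi|^2)^{\alpha_1}\big)^{\theta},
\]
and one splits $|\hat u(\xi)|^2 = \big(|\hat u(\xi)|^2\big)^{1-\theta}\big(|\hat u(\xi)|^2\big)^{\theta}$ in the same way.

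For $\theta\in(0,1)$ I would then rewrite
\[
\norm{u}_{H^\alpha}^2 = \int_{\mbR^2}\Big[(1+|\xi|^2)^{\alpha_0}|\hat u(\xi)|^2\Big]^{1-\theta}\Big[(1+|\xi|^2)^{\alpha_1}|\hat u(\xi)|^2\Big]^{\theta}\dd\xi
\]
and apply H\"older's inequality with conjugate exponents $1/(1-\theta)$ and $1/\theta$, which gives $\norm{u}_{H^\alpha}^2 \le \norm{u}_{H^{\alpha_0}}^{2(1-\theta)}\norm{u}_{H^{\alpha_1}}^{2\theta}$; taking square roots yields the stated inequality. If one of $\norm{u}_{H^{\alpha_0}}$, $\norm{u}_{H^{\alpha_1}}$ is infinite the bound is trivial, so one may assume both finite, whence the two bracketed functions above lie in $L^1(\mbR^2)$ and H\"older genuinely applies (local integrability of $\hat u$ being part of the definition of $H^\alpha$). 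The endpoints $\theta=0$ and $\theta=1$ are immediate, as the asserted inequality is then an equality. Finally, the embedding $H^{\alpha_0}\cap H^{\alpha_1}\hookrightarrow H^\alpha$ follows at once: for $u$ in the intersection the right-hand side of the inequality is finite, so $\norm{u}_{H^\alpha}<\infty$, and the same estimate shows the inclusion is continuous.

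I do not expect any genuine obstacle: the argument is the classical log-convexity of the map $\alpha\mapsto \norm{u}_{H^\alpha}^2$ in disguise, and the only points requiring a word of care — the degenerate H\"older exponents at $\theta\in\{0,1\}$ and the case of infinite norms — are dealt with by the remarks above. Alternatively, since the statement is quoted with a precise reference, one may simply cite \cite[Prop. 1.32 \& Prop. 1.52]{bahouri_chemin_danchin_11}.
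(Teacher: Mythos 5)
Your argument is correct and is exactly the standard Fourier-side H\"older interpolation proof; the paper itself gives no proof of this lemma, simply citing \cite[Prop.~1.32 \& Prop.~1.52]{bahouri_chemin_danchin_11}, and your computation is the one found in that reference. The handling of the degenerate exponents $\theta\in\{0,1\}$ and of infinite norms is appropriately noted, so there is nothing to add.
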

\begin{lemma}[{\cite[Thm. 1.66]{bahouri_chemin_danchin_11}}]\label{lem:SobEmbeddings}
For $\alpha\in \mbR$, the space $H^\alpha(\mbR^2)$ embeds continuously into
\begin{itemize}
\item the Lebesgue space $L^p(\mbR^2)$, if $0<\alpha<d/2$ and $2\,\leq\, p\,\leq\, \blue{\frac{4}{2-2\alpha}}$;
\item the H\"older space $\mcC^{k,\rho}(\mbR^2)$, if $\alpha \geq \blue{1+k+\rho}$ for some $k\in \mbN$ and $\rho\in [0,1)$.  
\end{itemize}
\end{lemma}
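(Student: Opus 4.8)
The plan is to derive each embedding from a single critical estimate and then propagate it by soft operations. Throughout $d=2$, and I use freely that $H^{\alpha}(\mbR^2)\hookrightarrow L^2(\mbR^2)$ for $\alpha\geq0$ (cf. \eqref{eq:SobMonotone}), that $\|u\|_{H^\alpha}\sim\|u\|_{L^2}+\|(-\Delta)^{\alpha/2}u\|_{L^2}$ for $\alpha\geq0$, and that $D^\beta$ maps $H^\alpha$ continuously into $H^{\alpha-|\beta|}$. For the Lebesgue embedding it suffices to treat the endpoint $p_\ast:=\tfrac{4}{2-2\alpha}=\tfrac{2d}{d-2\alpha}$, because for $2\leq p\leq p_\ast$ one then interpolates $\|u\|_{L^p}\leq\|u\|_{L^2}^{1-\theta}\|u\|_{L^{p_\ast}}^{\theta}$ (with $\tfrac1p=\tfrac{1-\theta}{2}+\tfrac{\theta}{p_\ast}$) and bounds $\|u\|_{L^2}\leq\|u\|_{H^\alpha}$ and $\|u\|_{L^{p_\ast}}\lesssim\|(-\Delta)^{\alpha/2}u\|_{L^2}\leq\|u\|_{H^\alpha}$. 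The critical bound $\|(-\Delta)^{-\alpha/2}g\|_{L^{p_\ast}}\lesssim\|g\|_{L^2}$, applied with $g:=(-\Delta)^{\alpha/2}u\in L^2(\mbR^2)$, is exactly the Hardy--Littlewood--Sobolev inequality for the Riesz potential $(-\Delta)^{-\alpha/2}$, whose convolution kernel is a constant multiple of $|x|^{\alpha-d}$; it holds precisely when $\tfrac1{p_\ast}=\tfrac12-\tfrac{\alpha}{d}$, which is solvable with $2<p_\ast<\infty$ exactly under the hypothesis $0<\alpha<d/2$.

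For the Hölder embedding, fix a multi-index $\beta$ with $|\beta|\leq k$; then $D^\beta u\in H^{\alpha-|\beta|}\hookrightarrow H^{\alpha-k}$ with $\alpha-k\geq d/2+\rho$, so it is enough to prove $H^{d/2+\rho}(\mbR^2)\hookrightarrow\mcC^{0,\rho}(\mbR^2)$ for $\rho\in(0,1)$: for $|\beta|<k$ one has $\alpha-|\beta|>d/2$ and boundedness of $D^\beta u$ is then automatic, and boundedness together with $\rho'$-Hölder continuity for $\rho'\geq\rho$ implies $\rho$-Hölder continuity globally on $\mbR^2$. For $v\in H^{d/2+\rho}(\mbR^2)$, Fourier inversion together with the elementary bound $|e^{ix\cdot\xi}-e^{iy\cdot\xi}|\leq2^{1-\rho}\bigl(|x-y|\,|\xi|\bigr)^{\rho}$ gives
\begin{equation*}
|v(x)-v(y)|\;\leq\;2^{1-\rho}|x-y|^{\rho}\int_{\mbR^2}|\xi|^{\rho}|\hat v(\xi)|\,\dd\xi\;\leq\;C\,|x-y|^{\rho}\,\Bigl\|\,|\xi|^{\rho}(1+|\xi|^2)^{-\frac{d/2+\rho}{2}}\,\Bigr\|_{L^2(\mbR^2)}\,\|v\|_{H^{d/2+\rho}},
\end{equation*}
and the square of the displayed weight decays like $|\xi|^{2\rho-d-2\rho}=|\xi|^{-d}$ at infinity, so the $L^2$ norm is finite whenever the inequality $\alpha-k\geq d/2+\rho$ is \emph{strict}; the same computation with $|e^{ix\cdot\xi}|\leq1$ in place of the difference yields $\|v\|_{L^\infty}\lesssim\|v\|_{H^{d/2+\rho}}$ once $\rho>0$.

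The main obstacle is the borderline exponents. The Lebesgue endpoint $p=p_\ast$ is not reached by naive dyadic summation and genuinely needs the sharp estimate above — equivalently a Littlewood--Paley argument $H^\alpha=B^{\alpha}_{2,2}\hookrightarrow B^{0}_{p_\ast,2}\hookrightarrow L^{p_\ast}$, combining Bernstein's inequality, $\ell^2\hookrightarrow\ell^{p_\ast}$ for $p_\ast\geq2$, and $B^0_{p_\ast,2}\hookrightarrow L^{p_\ast}$ (the latter from the Littlewood--Paley square function and Minkowski's inequality, valid since $p_\ast\geq2$). For the Hölder embedding the Fourier-weight argument only closes when $\alpha>d/2+k+\rho$; the boundary value $\alpha=d/2+k+\rho$ with $\rho\in(0,1)$ is still true but requires replacing Cauchy--Schwarz by a Littlewood--Paley decomposition $D^\beta u=\sum_{j\geq-1}\Delta_j D^\beta u$ and summing the blocks in two ways — the bound $\|\Delta_j D^\beta u\|_{L^\infty}$ against $|x-y|\,\|\Delta_j\nabla D^\beta u\|_{L^\infty}$, split at $2^j\sim|x-y|^{-1}$ — which turns both partial sums into convergent geometric series against the $\ell^2$ sequence $\bigl(2^{j(\alpha-k)}\|\Delta_j D^\beta u\|_{L^2}\bigr)_j$. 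In the integer case $\rho=0$ one must read $\mcC^{k,0}$ as the Zygmund class $B^{k}_{\infty,\infty}$ (or else impose the strict inequality), since $H^{d/2}(\mbR^2)\not\hookrightarrow L^\infty(\mbR^2)$. All of these refinements are precisely what is carried out in \cite{bahouri_chemin_danchin_11}, from which the statement is quoted.
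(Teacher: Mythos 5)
Your proposal is correct, but it cannot be compared to a proof in the paper because the paper offers none: Lemma~\ref{lem:SobEmbeddings} is quoted verbatim from \cite[Thm.~1.66]{bahouri_chemin_danchin_11} and used as a black box. What you supply is a genuine self-contained argument, and it takes a different route from the cited source: for the Lebesgue embedding you reduce to the endpoint $p_\ast=\frac{2d}{d-2\alpha}$ and invoke Hardy--Littlewood--Sobolev for the Riesz potential, whereas Bahouri--Chemin--Danchin argue entirely through Littlewood--Paley theory ($B^\alpha_{2,2}\hookrightarrow B^0_{p_\ast,2}\hookrightarrow L^{p_\ast}$, which you also sketch as the alternative); for the H\"older embedding your primary argument is Fourier inversion plus Cauchy--Schwarz, which you correctly observe only closes for strict inequality $\alpha-k>d/2+\rho$, and you correctly repair the borderline case $\alpha-k=d/2+\rho$ with the standard dyadic split at $2^j\sim|x-y|^{-1}$, which needs $0<\rho<1$ at both ends of the sum. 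The HLS route buys a shorter path to the sharp Lebesgue exponent at the price of importing a nontrivial harmonic-analysis input; the Littlewood--Paley route is longer but uniform across both statements and is what the cited theorem actually uses.

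One point deserves emphasis. You correctly flag that the second bullet, as transcribed in the paper with $\rho\in[0,1)$ and the non-strict inequality $\alpha\geq 1+k+\rho$, is \emph{false} at $\rho=0$ with equality: $H^{d/2}(\mbR^2)=H^1(\mbR^2)$ does not embed into $L^\infty(\mbR^2)$, and the correct target at that exponent is the Zygmund class $B^k_{\infty,\infty}$. This is not a gap in your proof but a defect of the statement; note, however, that the paper invokes precisely this excluded case ($H^1(\mbR^2)\hookrightarrow\mcC^{0,0}(\mbR^2)$ in the proof of Lemma~\ref{lemma:bound-drift}), so your observation identifies a point where the lemma as stated does not cover its own use. (The use in Lemma~\ref{lemma:bound-drift} can be repaired, e.g.\ by interpolating $\|u\|_{L^3}\leq\|u\|_{L^2}^{1/3}\|u\|_{L^4}^{2/3}$ and using the first bullet with $\alpha=1/2$, which avoids $L^\infty$ altogether, but that is outside the scope of the lemma under review.)
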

\section{Stratonovich to It\^o correction}\label{app:StratToIto}
We briefly detail the necessary calculations to justify Remark~\ref{rem:StratToIto}. We refer to \cite[Sec.~2.2]{coghi_flandoli_16}, \cite[Sec.~2]{flandoli_luo_21_highmode} and \cite[Sec.~2.3]{galeati_20_convergence} for similar arguments. All equalities below should be interpreted in the weak sense.
\begin{lemma}
Let $(u,\bar{T})$ be a Stratonovich solution to \eqref{eq:TurbulentParEllKS}, in the sense of Definition \ref{def:StratSol}, with $\bm{\sigma}$ satisfying Assumptions \ref{ass:Noise1}-\ref{ass:Noise3}. Then $u$ also solves the It\^o SPDE,
\begin{equation*}
\begin{cases}
\dd u_t = ((1+\gamma)\Delta u_t + \chi \nabla \cdot (u_t\nabla c_t)) \,\dd t + \sqrt{2\gamma}\sum_{k=1}^\infty \nabla \cdot (\sigma_k u_t) \dd W^k_t,&\\
-\Delta c_t = u_t,&\\
u\tzero = u_0.&
\end{cases}
\end{equation*}
\end{lemma}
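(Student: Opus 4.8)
The plan is to carry out the standard Stratonovich-to-It\^o conversion for the noise term
$\sqrt{2\gamma}\sum_{k\geq 1}\nabla\cdot(\sigma_k u_t)\circ\dd W^k_t$, showing that the It\^o correction is exactly $\gamma\Delta u_t\,\dd t$, which accounts for the passage from $\Delta u_t$ in \eqref{eq:StratEquation} to $(1+\gamma)\Delta u_t$ in \eqref{eq:ItoSol}. (The sign in front of $\chi$ displayed in the lemma statement is a typo inherited from the sign conventions; the content is the conversion of the transport noise.) First I would recall that for a Stratonovich integral $\int_0^t \langle A_k u_s,\nabla\phi\rangle\circ\dd W^k_s$, where $A_k u := \nabla\cdot(\sigma_k u) = \sigma_k\cdot\nabla u$ (using $\nabla\cdot\sigma_k=0$ from \ref{ass:Noise2}), the It\^o form is $\int_0^t\langle A_k u_s,\nabla\phi\rangle\,\dd W^k_s + \tfrac12[\,\langle A_k u_\cdot,\nabla\phi\rangle, W^k\,]_t$.

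The key step is to compute the joint quadratic variation. Since $u$ solves the Stratonovich equation, the only part of $\dd\langle A_k u_s,\nabla\phi\rangle$ contributing to the bracket with $W^k$ is the martingale part, which comes from the $j$-th noise term $\sqrt{2\gamma}\langle A_j u_s,\nabla\phi\rangle\circ\dd W^j_s$; at the level of quadratic covariation with $W^k$ only $j=k$ survives, giving $\tfrac12\sqrt{2\gamma}\int_0^t\langle A_k(A_k u_s),\nabla\phi\rangle\,\dd s$. Summing over $k$ and collecting the $\sqrt{2\gamma}\cdot\sqrt{2\gamma}=2\gamma$ prefactors yields a correction $\gamma\sum_{k\geq 1}\int_0^t\langle A_k^2 u_s,\nabla\phi\rangle\,\dd s$. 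I would then show
\[
\sum_{k\geq 1}\langle A_k^2 u,\nabla\phi\rangle = \sum_{k\geq 1}\langle \nabla\cdot(\sigma_k\,\nabla\cdot(\sigma_k u)),\nabla\phi\rangle = \langle \Delta u,\nabla\phi\rangle,
\]
using assumption \ref{ass:Noise3}: writing everything out in components, $\sum_k \sigma_k^a\partial_a(\sigma_k^b\partial_b u)$, integrating by parts against $\phi$, and using $q^{ab}(x,x)=\delta^{ab}$ together with $\nabla\cdot\sigma_k=0$ and the fact that the derivatives of $q(x,y)$ evaluated on the diagonal vanish appropriately (this is where $Q(x-y)$ depending only on $x-y$ with $Q(0)=\mathrm{Id}$ and $Q\in C^2$ enters). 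Concretely, $\sum_k \sigma_k^a(x)\sigma_k^b(x)=\delta^{ab}$ and $\sum_k \sigma_k^a(x)\partial_a\sigma_k^b(x) = \tfrac12\partial_{x_a}q^{ab}(x,y)\big|_{y=x}$ is controlled/vanishes by the difference structure, leaving precisely the Laplacian.

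For rigor I would note, as the paper does elsewhere, that these manipulations are justified because under \ref{ass:Noise1} each $A_k u_s = \sigma_k\cdot\nabla u_s$ lies in $L^2$ with $\sum_k\|A_k u_s\|_{L^2}^2\leq (\sum_k\|\sigma_k\|_{L^\infty}^2)\|\nabla u_s\|_{L^2}^2$, and $u\in L^2_TH^1$ with the integrability from Definition~\ref{def:StratSol}, so all series converge in the appropriate spaces and Fubini/dominated convergence apply; the second-order term $A_k^2 u$ is handled distributionally (tested against $\phi\in H^1$) after integration by parts so that no second derivatives of $u$ are needed. The main obstacle is the bookkeeping in the component computation of $\sum_k A_k^2$ and justifying that the cross terms involving $\nabla\sigma_k$ assemble exactly into $\Delta u$ rather than a general second-order operator — this is precisely the point where assumption \ref{ass:Noise3} (the covariance being a function of $x-y$ normalized to the identity at $0$, with $C^2$ regularity) is essential, and I would invoke the analogous computations in \cite[Sec.~2.2]{coghi_flandoli_16} and \cite[Sec.~2]{flandoli_luo_21_highmode} to keep the argument short.
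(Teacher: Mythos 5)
Your proposal follows essentially the same route as the paper's Appendix~B argument: convert each Stratonovich integral via the quadratic covariation $[u,W^k]$, collect the $2\gamma$ prefactor, and reduce the second-order correction $\gamma\sum_k\nabla\cdot(\sigma_k\nabla\cdot(\sigma_k u))$ to $\gamma\Delta u$ using $\nabla\cdot\sigma_k=0$ and the constancy of $q(x,x)=Q(0)=\mathrm{Id}$ from \ref{ass:Noise3}. The only blemishes are cosmetic (testing $A_ku$ against $\nabla\phi$ rather than $\phi$, and the factor $\tfrac12$ in your identity for $\sum_k\sigma_k^a\partial_a\sigma_k^b$, which in fact vanishes outright since $x\mapsto q^{ab}(x,x)$ is constant and each $\sigma_k$ is divergence free); the substance matches the paper's proof.
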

\begin{proof}
Repeating the caveat that all equalities should be understood after testing against suitable test functions, for all $k\geq 1$ and making use of \ref{ass:Noise1} to ensure the stochastic integrals are well defined, 
\begin{equation*}
\int_0^t \nabla \cdot (\sigma_k u_s)\circ \dd W^k_s = \int_0^t \nabla \cdot (\sigma_k u_s)\dd W^k_s 
\,+\,\frac{1}{2}\int_0^t \nabla \cdot (\sigma_k \dd [ u,W^k]_s),
\end{equation*}
where the process $s \mapsto [ u,W^k]_s$ denotes the quadratic covariation between $u$ and $W$. Using \eqref{eq:TurbulentParEllKS} we find
\begin{equation*}
[ u,W^k]_s = \sqrt{2\gamma} \nabla \cdot (\sigma_k u_s),
\end{equation*}
so that we have,
\begin{equation}
\sqrt{2\gamma}\int_0^t \nabla \cdot (\sigma_k u_s)\circ \dd W^k_s = \sqrt{2\gamma}\int_0^t \nabla \cdot (\sigma_k u_s)\dd W^k_s \,+\, \gamma\int_0^t \nabla \cdot (\sigma_k \nabla \cdot (\sigma_k u_s))\dd s.
\end{equation}
Summing over $k\geq 1$ and applying the Leibniz rule, we see that,
\begin{align*}
\sum_{k=1}^\infty \nabla \cdot (\sigma_k(x) \nabla \cdot (\sigma_k(x) u_s(x))) &= \sum_{i,j =1}^d \partial_i\partial_j (Q^{ij}(0)u_s(x)) - \nabla \cdot \left(\left(\sum_{k=1}^\infty \nabla \sigma_k(x) \cdot \sigma_k(x)\right)u_s(x)\right), 
\end{align*}
where, $Q^{ij}(0)= \sum_{k=1}^\infty \sigma_k^i(x) \sigma_k^j(x)$ for any $x\in \mbR^2$ and $\nabla \sigma_k \cdot \sigma_k$ is the vector field with components,
\begin{equation*}
(\nabla \sigma_k \cdot \sigma_k)^i = \sum_{j=1}^d (\partial_j\sigma^i_k)\sigma^j_k.
\end{equation*}
Applying the Leibniz rule once more, for $j=1,\ldots, d$, we see that
\begin{align*}
\sum_{k=1}^\infty \sum_{j=1}^d (\partial_j\sigma^i_k(x))\sigma^j_k(x) & = \sum_{j=1}^d \partial_j q^{ij}(x,x) - \sum_{k=1}^\infty \sigma_k^i(x) \nabla \cdot \sigma_k(x).
\end{align*}
By Assumptions \ref{ass:Noise2} and \ref{ass:Noise3} we have that $\nabla \cdot \sigma_k =0$ and $q^{ij}(x,x)=Q^{ij}(0)=\delta_{ij}$ from which it follows that $\sum_{k=1}^\infty (\nabla \sigma_k \cdot \sigma_k)^i=0$ for all $i=1,\ldots,d$ and that
\begin{equation*}
\sum_{i,j =1}^d \partial_i\partial_j (Q^{ij}(0)u_s(x)) = \Delta u_s,
\end{equation*}
which completes the proof.
\end{proof}
\section{Local Existence}\label{app:LocalExist}
%
%
In this section we give a sketched proof of local existence of weak solutions to \eqref{eq:TurbulentParEllKS}. The method of proof is well known and can be found in a general form in \cite{prevot_rockner_07}. In the case of \eqref{eq:TurbulentParEllKS} a similar proof of local existence was exhibited in \cite[Prop.~3.6]{flandoli_galeati_luo_21_delayed}. For the readers convenience we supply here a lighter version adapted to our particular setting.
\begin{theorem}\label{th:LocalExist}
Let $u_0 \in L^2(\mbR^2)$. Then there exists a pair $(u,\bar{T})$, with $\bar{T}$ deterministic, which is a weak solution to \eqref{eq:TurbulentParEllKS} in the sense of Definition \ref{def:StratSol}. Furthermore, $\mbP$-a.s. $u\in C([0,\bar{T}];L^2(\mbR^2))$.
%
%
%
\end{theorem}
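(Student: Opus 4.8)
The plan is to run a standard truncated-approximation scheme adapted to the conservative structure of the noise, in the spirit of the variational theory of \cite{prevot_rockner_07} and the construction in \cite[Prop.~3.6]{flandoli_galeati_luo_21_delayed}. First I would pass to the equivalent It\^o formulation of Remark~\ref{rem:StratToIto}, so that the linear part becomes the analytic generator $(1+\gamma)\Delta$ and the noise enters as $\sqrt{2\gamma}\sum_k\nabla\cdot(\sigma_k u)\,\dd W^k$. Then I would regularise the Keller--Segel drift by a smooth cut-off $\theta_R$ of a norm of $u$ that controls $\|\nabla K\ast u\|_{L^\infty}$ via Lemma~\ref{lemma:bound-drift} (so that $\theta_R(\cdot)\,\chi\nabla\cdot(u\nabla K\ast u)$ is Lipschitz on bounded subsets of $S_T$), and project the regularised It\^o equation onto the finite-dimensional spaces $H_n$ of a Galerkin basis (with $P_n$ the corresponding orthogonal projection). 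Each resulting equation is a finite-dimensional SDE with locally Lipschitz, linearly bounded coefficients and hence has a global strong solution $u^{n,R}$ on the given filtered space.

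The heart of the matter is a uniform a priori bound in which the structure of $\bm{\sigma}$ is used decisively. Applying It\^o's formula to $\|u^{n,R}_t\|_{L^2}^2$, assumption \ref{ass:Noise2} makes $\langle u,\nabla\cdot(\sigma_k u)\rangle=\tfrac12\int(\nabla\cdot\sigma_k)\,u^2=0$, so the stochastic integral \emph{vanishes identically}, while \ref{ass:Noise3}-(b), i.e.\ $Q(0)=\mathrm{Id}$, turns the It\^o corrector $2\gamma\sum_k\|\nabla\cdot(\sigma_k u^{n,R})\|_{L^2}^2$ into $2\gamma\|\nabla u^{n,R}\|_{L^2}^2$, which cancels against the $\gamma$-part of the Laplacian. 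One is left with the \emph{pathwise} identity
\[
\|u^{n,R}_t\|_{L^2}^2=\|P_n u_0\|_{L^2}^2-2\int_0^t\|\nabla u^{n,R}_s\|_{L^2}^2\,\dd s\;\pm\;2\chi\int_0^t\theta_R(\cdot)\,\langle\nabla u^{n,R}_s,\,u^{n,R}_s\,\nabla K\ast u^{n,R}_s\rangle\,\dd s .
\]
Bounding the drift term by $\|\nabla K\ast u\|_{L^\infty}\|u\|_{L^2}\|\nabla u\|_{L^2}$, inserting Lemma~\ref{lemma:bound-drift} and Young's inequality to absorb half of $\|\nabla u^{n,R}_s\|_{L^2}^2$, I obtain a closed pathwise differential inequality $y(t)\le y(0)+C\int_0^t g(y(s))\,\dd s$ with $y=\|u^{n,R}\|_{L^2}^2$. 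Comparison with the scalar ODE $\dot z=Cg(z)$ yields, uniformly in $n$ and $R$, a deterministic time $T_0=T_0(\|u_0\|_{L^2})>0$ on which $\|u^{n,R}_t\|_{L^2}^2\le 2\|u_0\|_{L^2}^2$, and, re-integrating the identity, a bound on $\mathbb{E}\,\|u^{n,R}\|_{L^2_{T_0}H^1}^2$; taking $R$ larger than the resulting deterministic threshold makes the cut-off inactive on $[0,T_0]$.

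With these bounds I would pass to the limit $n\to\infty$, either via compactness (tightness of the laws on $C([0,T_0];H^{-\delta}_{\mathrm{loc}})\cap L^2_{T_0}L^2$ by Aubin--Lions, Skorokhod representation, and Gy\"ongy--Krylov to return to the original probability space) or, since the truncated coefficients are Lipschitz, by a direct contraction/Cauchy argument in $L^2(\Omega;S_{T_0})$; either way the limit $u$ solves the truncated It\^o equation, hence \eqref{eq:TurbulentParEllKS}, on $[0,T_0]$. Setting $\bar T:=T_0$, which is deterministic, and reading off $u\in C_{T_0}L^2\cap L^2_{T_0}H^1$ together with the moment bound required in Definition~\ref{def:StratSol} completes the construction; the $\mathbb{P}$-a.s.\ continuity $u\in C([0,\bar T];L^2)$ follows from the It\^o--Lions regularity theorem, applied to $u\in L^2_{T_0}H^1$ whose increment splits into an $L^2_{T_0}H^{-1}$ drift and an $L^2(\Omega\times[0,T_0];L^2)$ martingale (see \cite{prevot_rockner_07}).

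The main obstacle is the Keller--Segel drift on the unbounded domain $\mbR^2$. Unlike on $\mbT^d$ one does not have $L^2\hookrightarrow L^1$, so making $\nabla\cdot(u\nabla K\ast u)$ well defined \emph{and} estimable already forces one to keep the mass and positivity of the approximants under control, in order to apply Lemma~\ref{lemma:bound-drift} with a genuinely sub-critical power; this same sub-criticality is precisely what lets the drift be absorbed into the dissipation and hence gives a \emph{deterministic} existence time. Checking that the stochastic convolution and the corrector close in $S_T$ uniformly in the approximation is routine given \ref{ass:Noise1}--\ref{ass:Noise3}, as is the choice of $\theta_R$, but some care is needed to make the truncated norm compatible with the space in which the fixed point (or compactness) argument is run.
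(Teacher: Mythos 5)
Your overall architecture matches the paper's: Galerkin projection, a pathwise $L^2$ energy identity in which the stochastic integral vanishes because $\nabla\cdot\sigma_k=0$ and the It\^o corrector $2\gamma\sum_k\|\sigma_k\cdot\nabla u\|_{L^2}^2=2\gamma\|\nabla u\|_{L^2}^2$ (via $Q(0)=\mathrm{Id}$) cancels the $\gamma$-part of the dissipation, a scalar ODE comparison giving a \emph{deterministic} existence time depending only on $\|u_0\|_{L^2}$, and a weak-compactness extraction. (The paper performs the energy computation directly in Stratonovich form via \eqref{eq:StratChainRule}, so the corrector never appears; your It\^o bookkeeping is equivalent.) Where you genuinely diverge is in the two steps that the paper treats with the variational/monotonicity machinery of \cite{prevot_rockner_07}: the paper does \emph{not} truncate the drift, but instead proves that $A(u)=\Delta u-\chi\nabla\cdot(u\nabla K\ast u)$ is locally coercive, locally weakly monotone and hemicontinuous (Lemma~\ref{lem:GoodProps}), and then identifies the weak limit of $A(u^N)$ by the Minty--Browder trick; you instead propose a cut-off $\theta_R$ plus either tightness/Skorokhod/Gy\"ongy--Krylov or a direct contraction in $L^2(\Omega;S_{T_0})$. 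The compactness route is a legitimate standard alternative (at the cost of handling tightness on the unbounded domain, which you acknowledge), and it buys you nothing the monotonicity argument does not already give here. The final continuity claim is also handled differently: the paper proves $u\in C_{\bar T}L^2$ by hand from weak convergence plus convergence of norms in a Hilbert space, whereas you invoke the It\^o--Lions theorem; both are fine.

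One alternative you offer is genuinely shaky and you should not rely on it: the ``direct contraction/Cauchy argument in $L^2(\Omega;S_{T_0})$, since the truncated coefficients are Lipschitz.'' The noise coefficient $u\mapsto\sqrt{2\gamma}\,\nabla\cdot(\sigma_k u)$ loses a full derivative ($H^1\to L^2$, equivalently $L^2\to H^{-1}$), so a fixed-point or Cauchy estimate for the difference of two approximants produces the quadratic-variation term $2\gamma\|\nabla(u^N-u^M)\|_{L^2}^2$, which is only cancelled by the dissipation if the two approximations live in the \emph{same} equation; with distinct projections $\Pi_N,\Pi_M$ the cross terms do not cancel exactly and no smallness in $T$ is available to close a contraction. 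This is precisely the obstruction that makes the variational (coercivity $+$ monotonicity) framework, or a genuine compactness argument, the appropriate tool here. If you drop that alternative and carry out the tightness route carefully (including the strong $L^2_{T}L^2_{\mathrm{loc}}$ convergence needed to pass to the limit in $u\nabla K\ast u$, for which your uniform $L^1$ and positivity control is indeed essential on $\mbR^2$), your proof is complete and correct.
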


We begin with a local a priori bound on solutions to \eqref{eq:TurbulentParEllKS}. 
\begin{lemma}\label{lem:LocalAPriori}
Let $u_0 \in \mcP(\mbR^2)\,\cap\, L^2(\mbR^2)$. Then there exists a $\bar{T}=\bar{T}(\|u_0\|_{L^2})>0$ and a $C>0$, such that for any weak solution $u$ to \eqref{eq:TurbulentParEllKS} on $[0,\bar{T}]$,
\begin{equation}\label{eq:EnergyEstimateFinal}
 \sup_{t \in [0,\bar{T}]}\|u_t\|^2_{L^2}+ \int_0^{\bar{T}} \|u_t\|^2_{H^1}\dd t < C,\quad \mbP \text{ - a.s.}.
\end{equation}
Furthermore, it holds that
\begin{equation}\label{eq:appendix_apriori_L1}
    \|u_t\|_{L^1} = \|u_0\|_{L^1}=1 \quad \text{for all } t\in [0,\bar{T}).
\end{equation}
\end{lemma}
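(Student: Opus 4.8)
The plan is to run the standard energy estimate for $\|u_t\|_{L^2}^2$ using the It\^o formulation \eqref{eq:ItoSol} (via the It\^o chain rule for $F[u]=\|u\|_{L^2}^2$, cf.\ \eqref{eq:StratChainRule} and its It\^o analogue), and to close it on a short time interval by absorbing the chemotactic term into the diffusion via a Gagliardo--Nirenberg/Sobolev interpolation, exploiting that $\|u_t\|_{L^1}$ is conserved. First I would test \eqref{eq:ItoSol} (in It\^o form) against $u_t$ itself, which formally gives
\begin{equation*}
\dd \|u_t\|_{L^2}^2 = \Bigl(-2(1+\gamma)\|\nabla u_t\|_{L^2}^2 + 2\chi\langle u_t\nabla c_t,\nabla u_t\rangle + \gamma\sum_{k}\|\nabla\cdot(\sigma_k u_t)\|_{L^2}^2\Bigr)\dd t + \dd M_t,
\end{equation*}
where $M_t$ is a local martingale. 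The It\^o correction term $\gamma\sum_k\|\nabla\cdot(\sigma_k u_t)\|_{L^2}^2 = \gamma\sum_k\|\sigma_k\cdot\nabla u_t\|_{L^2}^2 \le \gamma\bigl(\sum_k\|\sigma_k\|_{L^\infty}^2\bigr)\|\nabla u_t\|_{L^2}^2$ is controlled by \ref{ass:Noise1}, but with only the crude $\ell^2 L^\infty$ bound it is not obviously dominated by $2(1+\gamma)\|\nabla u_t\|_{L^2}^2$; the cleaner route is to use instead the Stratonovich chain rule \eqref{eq:StratChainRule}, where the transport term $\sum_k DF[u][\nabla\cdot(\sigma_k u)]\circ\dd W^k = -2\sum_k\langle \sigma_k u,\nabla u\rangle\circ\dd W^k$ integrates against $u$ to a genuine (local) martingale with no drift contribution, since $\langle\sigma_k u,\nabla u\rangle = \tfrac12\langle\sigma_k,\nabla(u^2)\rangle = -\tfrac12\langle\nabla\cdot\sigma_k,u^2\rangle = 0$ by \ref{ass:Noise2}. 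This is the key structural simplification: conservative, divergence-free Stratonovich noise is an $L^2$-isometry, so the noise contributes nothing to the energy balance and one is left with $\dd\|u_t\|_{L^2}^2 = (-2\|\nabla u_t\|_{L^2}^2 + 2\chi\langle u_t\nabla c_t,\nabla u_t\rangle)\dd t + \dd(\text{mart})$, i.e.\ exactly the deterministic energy inequality pathwise.

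Next I would bound the chemotactic term. Using Lemma~\ref{lemma:bound-drift}, $\|\nabla c_t\|_{L^\infty}\lesssim \|u_t\|_{L^1}^{1/4}\|u_t\|_{L^2}^{1/2}\|u_t\|_{H^1}^{1/4}$, together with the conservation $\|u_t\|_{L^1}=1$ (established below), so
\begin{equation*}
2\chi\langle u_t\nabla c_t,\nabla u_t\rangle \le 2\chi\|\nabla c_t\|_{L^\infty}\|u_t\|_{L^2}\|\nabla u_t\|_{L^2} \lesssim \|u_t\|_{L^2}^{3/2}\|u_t\|_{H^1}^{5/4}\|\nabla u_t\|_{L^2}.
\end{equation*}
Bounding $\|u_t\|_{H^1}\le \|u_t\|_{L^2}+\|\nabla u_t\|_{L^2}$ and applying Young's inequality with a small parameter, one obtains $2\chi\langle u_t\nabla c_t,\nabla u_t\rangle \le \|\nabla u_t\|_{L^2}^2 + C\bigl(\|u_t\|_{L^2}^{2}+1\bigr)^{p}$ for a suitable power $p>1$ (here the precise exponents are routine and unimportant). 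Hence, writing $y_t := \|u_t\|_{L^2}^2$, we get pathwise $\dd y_t \le \bigl(-\|\nabla u_t\|_{L^2}^2 + C(1+y_t)^p\bigr)\dd t + \dd N_t$ with $N$ a local martingale; dropping the good term and comparing with the ODE $\dot z = C(1+z)^p$, $z(0)=\|u_0\|_{L^2}^2$, which has a finite blow-up time $T_{\max}(\|u_0\|_{L^2})$, yields a deterministic $\bar{T} = \bar{T}(\|u_0\|_{L^2})>0$ and a bound $\sup_{t\le\bar T}y_t \le C$ almost surely (a localisation/stopping-time argument handles the local martingale: stop at $\tau_R=\inf\{t:y_t\ge R\}$, take expectations, bound uniformly in $R$, then let $R\to\infty$). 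Re-integrating the good term then gives $\int_0^{\bar T}\|\nabla u_t\|_{L^2}^2\,\dd t \le C$ a.s., which combined with the $L^2$ bound gives \eqref{eq:EnergyEstimateFinal}.

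Finally, \eqref{eq:appendix_apriori_L1} is exactly statement \ref{it:L1Preserve} of Proposition~\ref{prop:sign-mass-preservation} (whose proof only uses \ref{ass:Noise1}--\ref{ass:Noise3} and the regularity already assumed of a weak solution), so it can be quoted directly; alternatively, one repeats the cut-off argument of that proposition with $M_\eps[u_t]=\langle\Psi_\eps,u_t\rangle$ and passes $\eps\to 0$ using the a priori bound \eqref{eq:EnergyEstimateFinal} to justify dominated convergence. The main obstacle is purely the bookkeeping around the local martingale and the subcritical absorption of the drift into $\|\nabla u\|_{L^2}^2$ on a short interval; there is no genuine difficulty beyond choosing $\bar T$ small enough relative to $\|u_0\|_{L^2}$ so that the comparison ODE has not yet blown up, and being careful that $\bar T$ depends only on $\|u_0\|_{L^2}$ (and the fixed data $\chi,\gamma,\bm\sigma$), not on $\omega$.
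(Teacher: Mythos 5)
Your proposal follows essentially the same route as the paper: apply the Stratonovich chain rule to $\|u_t\|_{L^2}^2$, observe that the noise term vanishes identically because $\langle \sigma_k u_s,\nabla u_s\rangle=\tfrac12\langle 1,\nabla\cdot(\sigma_k u_s^2)\rangle=0$ by \ref{ass:Noise2}, absorb the chemotactic term into $-\|\nabla u\|_{L^2}^2$ by interpolation and Young's inequality, compare pathwise with a locally Lipschitz ODE to obtain a deterministic $\bar T(\|u_0\|_{L^2})$, and quote Proposition~\ref{prop:sign-mass-preservation} for the $L^1$ identity, exactly as the paper does. Two small remarks: the paper estimates the nonlinear term via $\langle u_s\nabla c_s,\nabla u_s\rangle=\tfrac12\|u_s\|_{L^3}^3$ together with $H^{1/2}(\mbR^2)\hookrightarrow L^3(\mbR^2)$ rather than via Lemma~\ref{lemma:bound-drift} (both work, but your exponent should read $\|u_t\|_{H^1}^{1/4}$, not $\|u_t\|_{H^1}^{5/4}$ --- taken literally, $5/4$ produces a $\|\nabla u_t\|_{L^2}^{9/4}$ term that cannot be absorbed into $\|\nabla u_t\|_{L^2}^{2}$); and since the martingale is identically zero the differential inequality holds pathwise, which is precisely what yields the almost sure bound by a \emph{deterministic} constant --- your fallback of stopping at $\tau_R$ and taking expectations would only control $\mbE\left[\|u_t\|_{L^2}^2\right]$ and would not give \eqref{eq:EnergyEstimateFinal} as stated.
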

\begin{remark}
Since the constant on the right hand side of \eqref{eq:EnergyEstimateFinal} is non-random it follows immediately that $\|u\|_{L^\infty_{\bar{T}}L^2} + \|u\|_{L^2_{\bar{T}}H^1} \in L^p(\Omega;\mbR)$ for any $p\geq 1$.
\end{remark}
\begin{proof}[Proof of Lemma \ref{lem:LocalAPriori}]
The identity \eqref{eq:appendix_apriori_L1} is shown by Proposition~\ref{prop:sign-mass-preservation} so that we are only required to obtain \eqref{eq:EnergyEstimateFinal}.

By assumption $u_t \in H^1(\mbR^2)$ for all $t\in [0,\bar{T}]$ and it satisfies \eqref{eq:StratEquation}. In particular the Stratonovich integral is well-defined for $\mbP$-a.e. $\omega \in \Omega$. Applying \eqref{eq:StratChainRule} to the functional $F[u_t]:= \|u_t\|^2_{L^2}$, we have the identity,
\begin{equation}\label{eq:AdHocIto}
\begin{aligned}
\|u_t\|^2_{L^2} & = \|u_0\|^2_{L^2} - 2\int_0^t \|\nabla u_s\|_{L^2}^2 \dd s + 2\chi \int_0^t \langle \nabla u_s, u_s\nabla c_s \cdot \nabla u_s \rangle \dd s\\
&\quad -\sqrt{2\gamma} \sum_{k=1}^\infty \langle u_s \sigma_k, \nabla u_s \rangle \circ \dd W_t^k.
\end{aligned}
\end{equation}
For the non-linear term, integrating by parts and using the equation satisfied by $c$,
\begin{align*}
|\langle u_s\nabla c_s,\nabla u_s \rangle| = \frac{1}{2}|\langle \nabla c_s, \nabla (u_s^2)\rangle| = \frac{1}{2} \|u_s\|^3_{L^3}.
\end{align*}
Then using the Sobolev embedding $H^{1/2}(\mbR^2)\hookrightarrow L^3(\mbR^2)$, real interpolation as given by Lemma~\ref{lem:SobInterp} and Young's inequality, for any $\eps >0$,
\begin{equation*}
 \|u_s\|^3_{H^{1/2}} \,\leq\, \|u_s\|^{\frac{3}{2}}_{H^1}\|u_s\|^{\frac{3}{2}}_{L^2} \,\leq\, \frac{3}{4 \eps} \|u_s\|^2_{H^1} + \frac{\eps}{4}\|u_s\|^6_{L^2} \,\leq\, \frac{3}{4\eps}\|\nabla u_s\|^2_{L^2} + \frac{3}{4\eps}\|u_s\|^2_{L^2} + \frac{\eps}{4}\|u_s\|^6_{L^2}.
\end{equation*}
Regarding the stochastic integral, since each $\sigma_k$ is divergence free, it follows that,
\begin{equation*}
|\langle u_s\sigma_k,\nabla u_s\rangle | = \frac{1}{2}|\langle \sigma_k, \nabla (u_s^2)\rangle| = \frac{1}{2}|\langle 1, \nabla \cdot (\sigma_k u_s^2)\rangle| =0.
\end{equation*}
So, choosing $\eps = \chi$, we find that $\mbP$-a.s., 
\begin{equation}\label{eq:EnergyEstimateMed}
\|u_t\|^2_{L^2} \,\leq\, \|u_0\|^2_{L^2} -\frac{5}{8} \int_0^t \|\nabla u_s\|^2_{L^2}\dd s + \frac{3}{4}\int_0^t \|u_s\|^2_{L^2}\dd s+\frac{\chi^2}{4}\int_0^t \|u_s\|^6_{L^2}\dd s. 
\end{equation}
That is $t\mapsto \|u_t\|_{L^2}$ satisfies the non-linear, locally Lipschitz, differential inequality,
\begin{equation*}
\frac{\dd }{\dd t} \|u_t\|_{L^2} \,\leq\, \frac{3}{4}\|u_t\|_{L^2} +\frac{\chi^2}{4} \|u_t\|^3_{L^2}, \quad \mbP\text{-a.s.}
\end{equation*}
By standard ODE theory and recalling that $u_0$ is non-random, there exists a strictly positive, but possibly finite time $\bar{T}(\|u_0\|_{L^2})$ and a deterministic constant $C>0$, such that,
\begin{equation*}
\sup_{t\in [0,\bar{T}]}\|u_t\|_{L^2} \,\leq\, C, \quad \mbP\text{-a.s.}
\end{equation*}
Coming back to \eqref{eq:EnergyEstimateMed} to obtain a bound on $\int_0^t \|\nabla u_s\|^2_{L^2} \dd s$ for $t\,\leq\, \bar{T}$ completes the proof of \eqref{eq:EnergyEstimateFinal}.
\end{proof}
\begin{definition} We say that a mapping $A:H^{1}(\mbR^2)\rightarrow H^{-1}(\mbR^2)$ is \emph{locally coercive}, \emph{locally \blue{weakly} monotone} and \emph{hemi-continuous} if the following hold;

\emph{Locally coercive:} there exists an $\alpha>0$ such that if $u \in H^1(\mbR^2)$ with $\|u\|_{H^1}\,\leq\, R$ for any $R>0$ there exists a $\lambda>0$ for which it holds that
\begin{equation}\label{eq:SPDELocalCoercive}
2\langle A(u),u\rangle + \alpha\|u\|^2_{H^1}\,\leq\, \lambda\|u\|^2_{L^2}.
\end{equation}
\emph{Locally \blue{weakly} monotone:} for any $R>0$ there exists a $\lambda>0$ such that for all $u,\,w\in H^{1}(\mbR^2)$ with $\|u\|_{H^1}\vee \|w\|_{H^1}\,\leq\, R$ 
\begin{equation}\label{eq:SPDELocalMonontone}
2\langle A(u)-A(w),u-w\rangle \,\leq\, \lambda \left(\blue{\|u-w\|_{L^2}} + \|u-w\|^2_{L^2}\right).
\end{equation}
\emph{Hemi-continuous:} for any $u,\,w,\,v \in H^1(\mbR^2)$ the mapping,
\begin{equation}\label{eq:SPDEHemiCts}
\mbR \ni \theta \mapsto \langle A(u+\theta w),v\rangle \in \mbR,
\end{equation}
is continuous.
\end{definition}
\begin{lemma}\label{lem:GoodProps} The operator $A:\mcP(\mbR^2)\cap H^{1}(\mbR^2) \rightarrow H^{-1}(\mbR^2)$ given by the mapping,
\begin{equation*}
A(u):= \Delta u - \chi \nabla \cdot (u \nabla c),
\end{equation*}
is locally coercive, locally \blue{weakly} monotone and hemi-continuous.
\end{lemma}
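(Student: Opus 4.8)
The plan is to verify the three properties of the operator $A(u) = \Delta u - \chi \nabla \cdot (u\nabla c)$ one at a time, relying on the standard identities $\langle \Delta u, u\rangle = -\|\nabla u\|_{L^2}^2$ and $\langle \nabla \cdot(u\nabla c), u\rangle = -\langle u\nabla c, \nabla u\rangle = -\tfrac12\langle \nabla c, \nabla(u^2)\rangle = \tfrac12\|u\|_{L^3}^3$ (using $-\Delta c = u$), which already appeared in the proof of Lemma~\ref{lem:LocalAPriori}. These reduce each inequality to controlling the nonlinear chemotactic term by the dissipative term plus lower-order contributions.

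\textbf{Local coercivity.} First I would compute $2\langle A(u),u\rangle = -2\|\nabla u\|_{L^2}^2 + \chi\|u\|_{L^3}^3$. Using the Sobolev embedding $H^{1/2}(\mbR^2)\hookrightarrow L^3(\mbR^2)$, the interpolation inequality of Lemma~\ref{lem:SobInterp} ($\|u\|_{H^{1/2}}\le \|u\|_{L^2}^{1/2}\|u\|_{H^1}^{1/2}$), and Young's inequality, one gets $\chi\|u\|_{L^3}^3 \lesssim \chi\|u\|_{H^1}^{3/2}\|u\|_{L^2}^{3/2} \le \|\nabla u\|_{L^2}^2 + C\|u\|_{L^2}^2 + C\chi^2\|u\|_{L^2}^6$. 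On the ball $\|u\|_{H^1}\le R$ the last term is bounded by $C\chi^2 R^4\|u\|_{L^2}^2$, so with $\alpha = 1$ we obtain $2\langle A(u),u\rangle + \|u\|_{H^1}^2 \le \|u\|_{L^2}^2 + (\text{const depending on }R)\|u\|_{L^2}^2 =: \lambda\|u\|_{L^2}^2$, after absorbing the $\|u\|_{L^2}^2$ that appears on the left from $\|u\|_{H^1}^2 = \|\nabla u\|_{L^2}^2 + \|u\|_{L^2}^2$. This is essentially a repackaging of the a priori estimate already carried out.

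\textbf{Local weak monotonicity.} For $u,w\in \mcP(\mbR^2)\cap H^1$ write $c_u = K\ast u$, $c_w = K\ast w$, and compute $2\langle A(u)-A(w), u-w\rangle = -2\|\nabla(u-w)\|_{L^2}^2 - 2\chi\langle u\nabla c_u - w\nabla c_w, \nabla(u-w)\rangle$. I would split the bilinear defect $u\nabla c_u - w\nabla c_w = (u-w)\nabla c_u + w\nabla(c_u - c_w)$ and estimate each piece using Lemma~\ref{lemma:bound-drift} ($\|\nabla c\|_{L^\infty}\lesssim \|u\|_{L^1}^{1/4}\|u\|_{L^2}^{1/2}\|u\|_{H^1}^{1/4}$, which on the $H^1$-ball of radius $R$, together with $\|u\|_{L^1}=1$, is bounded by a constant $C_R$), plus the analogous bound for $\nabla(c_u-c_w)$ in terms of $\|u-w\|_{L^2}^{1/2}\|u-w\|_{H^1}^{1/2}$. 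Each resulting term is of the form (bounded constant)$\cdot\|u-w\|_{L^2}\cdot\|\nabla(u-w)\|_{L^2}$ or (bounded constant)$\cdot\|u-w\|_{L^2}^{1/2}\|u-w\|_{H^1}^{3/2}$; applying Young's inequality to peel off a small multiple of $\|\nabla(u-w)\|_{L^2}^2$ (absorbed into the $-2\|\nabla(u-w)\|_{L^2}^2$) leaves a bound of the form $\lambda(\|u-w\|_{L^2} + \|u-w\|_{L^2}^2)$, as required by \eqref{eq:SPDELocalMonontone}.

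\textbf{Hemi-continuity.} This is the easiest: for fixed $u,w,v\in H^1$, the map $\theta\mapsto \langle A(u+\theta w),v\rangle$ is a polynomial in $\theta$ — degree one from the $\Delta$ term and degree two from the quadratic term $\nabla\cdot((u+\theta w)\nabla K\ast(u+\theta w))$ — with coefficients that are finite pairings (finiteness follows from the same $L^3$/Sobolev bounds), hence continuous in $\theta$.

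\textbf{Main obstacle.} The delicate point is the weak monotonicity estimate: one must be careful that the chemotactic term only produces a \emph{weak} (not strong) monotonicity bound with the extra $\|u-w\|_{L^2}$ term (not just $\|u-w\|_{L^2}^2$), which is why the definition \eqref{eq:SPDELocalMonontone} is stated with that term, and one must ensure all constants genuinely depend only on the $H^1$-ball radius $R$ — this uses crucially that both arguments are probability measures so $\|u\|_{L^1}=\|w\|_{L^1}=1$ is available to feed into Lemma~\ref{lemma:bound-drift}. Getting the interpolation exponents to line up so that a power of $\|\nabla(u-w)\|_{L^2}^2$ strictly less than $2$ appears (leaving room to absorb it) is the one computation that needs care.
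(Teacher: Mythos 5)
Your proposal is correct and follows essentially the same route as the paper: the paper likewise reduces coercivity to absorbing the chemotactic term into the dissipation (it uses H\"older with the $\|\nabla c\|_{L^\infty}$ bound of Lemma~\ref{lemma:bound-drift} rather than your $L^3$/Sobolev--interpolation route, but the two are interchangeable), obtains weak monotonicity from the mirror-image splitting $u\nabla c_{u-w}+(u-w)\nabla c_w$ of your decomposition, with the same use of $\|u\|_{L^1}=\|w\|_{L^1}=1$ to feed Lemma~\ref{lemma:bound-drift} and the same appearance of the first-power $\|u-w\|_{L^2}$ term, and verifies hemi-continuity by the same quadratic expansion in $\theta$. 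You also correctly identify the one point that needs care, namely that the chemotactic term only yields the weak form \eqref{eq:SPDELocalMonontone}.
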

\begin{proof}
\emph{Local Coercivity:} Approximating $u$ by smooth compactly supported functions it follows that,
\begin{align*}
\langle A(u),u\rangle &= -\|\nabla u \|_{L^2} + \chi \langle u \nabla c, \nabla u \rangle.
\end{align*}
By H\"older's inequality, Young's inequality and Lemma \ref{lemma:bound-drift}, for any $\eps>0$
\begin{equation*}
|\langle u \nabla c, \nabla u \rangle| \,\leq\, \|u\|_{L^2}\|\nabla c\|_{L^\infty}\|\nabla u\|_{L^2} \,\leq\, \frac{1}{2\eps}\|\nabla u\|^2_{L^2} + \frac{\eps}{2} \|u\|^2_{L^2}\|\nabla u\|^2_{H^1}
\end{equation*}
So that under the assumption that $\|u\|_{H^1}\,\leq\, R$ and choosing $\eps>0$ sufficiently small, there exist $\alpha,\,\lambda(R)>0$ such that 
\begin{equation*}
2|\langle A(u),u\rangle| \,\leq\, -\alpha \|u\|^2_{H^1} + \lambda \|u\|^2_{L^2}.
\end{equation*}

\emph{Local Weak Monotonicity:} Let us introduce the notation $-\Delta c_u = u$, so that we have
\begin{align*}
    \langle A(u)-A(w),u-w\rangle = - \|\nabla(u-w)\|^2_{L^2} + \chi \langle u\nabla c_u -w\nabla c_w,\nabla(u-w)\rangle.
\end{align*}
Applying Cauchy--Schwarz followed by the triangle inequality, Young's product inequality and H\"older's inequality gives,
\begin{align*}
    \langle A(u)-A(w),u-w\rangle 
    \leq &  - \|\nabla(u-w)\|^2_{L^2} + \chi \|\nabla(u-w)\|_{L^2}\left(\|u\nabla c_{u-w}\|_{L^2} + \|\nabla c_w(u-w)\|_{L^2}\right)\\
    \leq & -\frac{1}{2}\|\nabla(u-w)\|^2_{L^2} + \chi^2\|u\nabla c_{u-w}\|^2_{L^2} + \frac{\chi^2}{2}\|\nabla c_w (u-w)\|^2_{L^2}\\
    \leq & \chi^2\,\left( \|u\|^2_{L^2} \|\nabla c_{u-w}\|^2_{L^\infty} +  \|\nabla c_w\|^2_{L^\infty}\|u-w\|^2_{L^2}\right).
\end{align*}
Making use of Lemma~\ref{lemma:bound-drift} and the assumptions that $\|u\|_{L^1}\vee \|w\|_{L^1}=1$ and $\|u\|_{H^1}\vee \|w\|_{H^1}\leq R$ we find the estimates,
\begin{equation*}
   \|\nabla c_{u-w}\|^2_{L^\infty}\,\lesssim\,  \|u-w\|^{\frac{1}{2}}_{L^1}\|u-w\|^{\frac{1}{2}}_{H^1}\|u-w\|_{L^2} \, \lesssim\, R^{\frac{1}{2}}  \|u-w\|_{L^2}
\end{equation*}
and
\begin{equation*}
    \|\nabla c_w\|^2_{L^\infty} \,\lesssim\,  \|w\|^{1/2}_{L^1}\|w\|^{1/2}_{H^1}\|w\|_{L^2}\|u-w\|^2_{L^2}\,\leq \, R^{\frac{3}{2}}.
\end{equation*}
Hence, again using the assumption $\|u\|_{L^2}\,\leq\, \|u\|_{H^1}\,\leq \,R$, we find,
\begin{equation*}
     \langle A(u)-A(w),u-w\rangle \lesssim \chi^2\left( R^{\frac{5}{2}} \|u-w\|_{L^2} +  R^{\frac{3}{2}}\|u-w\|^2_{L^2}\right),
\end{equation*}
which proves the claim.

\emph{Hemi-continuity:} Letting $u,\,v,\,w \in H^1(\mbR^2)$ and $\theta \in \mbR$, we have
\begin{equation*}
|\langle A(u+\theta w)-A(u),v\rangle| \,\leq\, \theta |\langle \nabla w,\nabla v\rangle| + \chi |\langle (u+\theta w)\nabla c_{u+\theta w}-u\nabla c_u,\nabla v\rangle.
\end{equation*}
The first term directly converges to $0$ as $\theta\rightarrow 0$. For the second term, after applying H\"older's inequality we see that we are required to control
\begin{equation*}
\|(u+\theta w) \nabla c_{u+\theta w}-u\nabla c_u\|^2_{L^2} \,\leq\, \theta \left(\|u\nabla c_w\|^2_{L^2} + \|w\nabla c_u\|^2_{L^2}\right)+ \theta ^2\|w\nabla c_w\|^2_{L^2},
\end{equation*}
which again directly converges to $0$ as $\theta \rightarrow 0$.
\end{proof}
\begin{lemma}\label{lem:BGoodProps}
For $\bm{\sigma}:=\{\sigma_k\}_{k\geq 1}$ satisfying \ref{ass:Noise1} and divergence free, the mapping,
\begin{equation*}
H^1(\mbR^2) \ni u \mapsto \sum_{k\geq 1} \nabla \cdot (\sigma_k u) \in L^2(\mbR^2),
\end{equation*}
is linear and strongly continuous.
\end{lemma}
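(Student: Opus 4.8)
The plan is to check the two asserted properties in turn, the substantive content being a single Hilbert--Schmidt type estimate already implicit in the remark following Definition~\ref{def:StratSol}. Here the map should be understood as $u \mapsto \big(\nabla\cdot(\sigma_k u)\big)_{k\geq 1}$, taking values in $\ell^2(\mbN;L^2(\mbR^2))$, which is the codomain relevant to the variational construction of Theorem~\ref{th:LocalExist}. Linearity is immediate: for each fixed $k$, $u\mapsto \nabla\cdot(\sigma_k u)$ is the composition of multiplication by the fixed field $\sigma_k$ with the (linear) divergence operator, and forming the sequence over $k$ preserves linearity. It therefore suffices to prove the map is bounded, since a bounded linear operator between normed spaces is continuous.

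For boundedness I would first invoke the Leibniz rule together with $\nabla\cdot\sigma_k=0$ to write $\nabla\cdot(\sigma_k u)=\sigma_k\cdot\nabla u$ for every $u\in H^1(\mbR^2)$. Since each $\sigma_k$ is a priori only in $L^\infty(\mbR^2)$, this identity should be read distributionally and justified by density: approximate $u$ in $H^1(\mbR^2)$ by smooth compactly supported $u_n$, for which the product rule gives $\nabla\cdot(\sigma_k u_n)=\sigma_k\cdot\nabla u_n$ directly (testing against $\varphi\in C^\infty_c$ and using $\langle\nabla\cdot\sigma_k,u_n\varphi\rangle=0$), then pass to the limit, noting $\sigma_k u_n\to\sigma_k u$ in $L^2$ and $\sigma_k\cdot\nabla u_n\to\sigma_k\cdot\nabla u$ in $L^2$. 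With this in hand, the pointwise bound $\|\sigma_k\cdot\nabla u\|_{L^2}\leq\|\sigma_k\|_{L^\infty}\|\nabla u\|_{L^2}$ and summation over $k$ yield
\begin{equation*}
\sum_{k\geq 1}\|\nabla\cdot(\sigma_k u)\|_{L^2}^2=\sum_{k\geq 1}\|\sigma_k\cdot\nabla u\|_{L^2}^2\leq\Big(\sum_{k\geq 1}\|\sigma_k\|_{L^\infty}^2\Big)\|\nabla u\|_{L^2}^2\leq\Big(\sum_{k\geq 1}\|\sigma_k\|_{L^\infty}^2\Big)\|u\|_{H^1}^2,
\end{equation*}
which is finite by \ref{ass:Noise1}. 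This shows both that $\big(\nabla\cdot(\sigma_k u)\big)_{k\geq 1}$ genuinely defines an element of $\ell^2(\mbN;L^2(\mbR^2))$ and that the operator is bounded, with operator norm controlled by $\big(\sum_{k\geq 1}\|\sigma_k\|_{L^\infty}^2\big)^{1/2}$.

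Strong continuity is then immediate: applying the same estimate to $u-w$ shows the map is Lipschitz on $H^1(\mbR^2)$, hence continuous. I do not expect any genuine obstacle in this lemma — it is essentially a restatement of \ref{ass:Noise1} via the divergence-free condition. The only point that deserves a sentence of care is the distributional product rule $\nabla\cdot(\sigma_k u)=\sigma_k\cdot\nabla u$ under the weak hypothesis $\sigma_k\in L^\infty$, which the density argument above resolves (and which is trivial under the stronger assumption \ref{ass:Noise2}).
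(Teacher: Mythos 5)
Your proof is correct and follows essentially the same route as the paper's: use $\nabla\cdot\sigma_k=0$ to rewrite $\nabla\cdot(\sigma_k u)=\sigma_k\cdot\nabla u$ and then bound via \ref{ass:Noise1}. Your reading of the codomain as $\ell^2(\mbN;L^2(\mbR^2))$ is in fact the more careful one --- the paper's displayed estimate passes from $\sum_{k}\|\sigma_k\cdot\nabla(u-w)\|_{L^2}$ to the $\ell^2L^\infty$ norm, a step that taken literally would require $\ell^1$ summability of $\|\sigma_k\|_{L^\infty}$ --- and your density argument for the distributional product rule is a welcome extra detail.
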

\begin{proof}
Linearity is clear. Let $u,\,w \in H^{1}(\mbR^2)$, using the divergence free property of the $\sigma_k$,
\begin{equation*}
\left\|\sum_{k\geq 1} \nabla \cdot (\sigma_k (u-w))\right\|_{L^2} \,\leq\, \sum_{k\geq 1} \|\sigma_k \cdot \nabla (u-w)\|_{L^2} \,\leq\, \|\bm{\sigma}\|_{\ell^2L^\infty}\|u-w\|_{H^1}.
\end{equation*}
\end{proof}
\begin{proof}[Proof of Theorem \ref{th:LocalExist}]
The strategy of proof is to first define a finite dimensional approximation to \eqref{eq:TurbulentParEllKS} using a Galerkin projection, we project the solution and the non-linear term to a finite dimensional subspace of $L^2(\mbR^2)$. Using Lemma \ref{lem:GoodProps} and the linearity of the noise term it follows that this finite dimensional system has a global solution and using the same arguments as in the proof of Lemma \ref{lem:LocalAPriori}, there is a non-trivial interval $[0,\bar{T}]$ on which we have uniform control on this solution. By Banach--Alaoglu we can extract a convergent subsequence, whose limit, $u$, will be our putative solution to \eqref{eq:TurbulentParEllKS}. By linearity the noise term converges so it will remain to show that $A$ converges along this subsequence to $A(u)$ and that $u$ is a solution in the sense of Definition \ref{def:StratSol}. Finally we prove that weak solutions in the sense of Definition \ref{def:StratSol} are unique. 

For $N\geq 1$, let $H_N \subset L^2(\mbR^2)$ denote the finite dimensional subspace spanned by the basis vectors $\{e_k\}_{|k|\,\leq\, N}$ and $\Pi_N: L^2(\mbR^2)\rightarrow H_N$ be an orthogonal projection such that $\|\Pi^N f\|_{L^2}\,\leq\, \|f\|_{L^2}$. Then we consider the finite dimensional system of Stratonovich SDEs,
\begin{equation}\label{eq:FiniteDimStratSDE}
\begin{aligned}
\dd u_{N;t} &= \left(\Delta u_{N;t} + \chi \nabla \cdot (\Pi_N(u_{N;t}\nabla c_{N;t})) \right)\dd t \\
&\quad + \sqrt{2\gamma}\sum_{k=1}^\infty \Pi_N(\sigma_k \nabla u_{N;t})\circ \dd W^k_t\\
u_{N;0}&= \Pi_N u_{0}.
\end{aligned}
\end{equation}
It follows from \cite[Thm. 3.1.1]{prevot_rockner_07} and Lemma \ref{lem:GoodProps} that a unique, global solution exists for all $N\geq 1$. Furthermore, for each $N\geq 1$, $u^N$ is a smooth solution to a truncated version of \eqref{eq:TurbulentParEllKS} with smooth initial data and is such that for all $t>0$ it holds that $\|u_{N;t}\|_{L^1} = \|u_{N;0}\|_{L^1}=1$. It is readily shown that
$$\langle \nabla u_N,\Pi_N(u_N\nabla c_N)\rangle = \langle \nabla u_N,u_N\nabla c_N\rangle.$$
Hence, using the same arguments as in the proof of Lemma \ref{lem:LocalAPriori}, there exists a $\bar{T}\in (0,\infty)$ depending only on $\|u^N_0\|_{L^2}\,\leq\, \|u_0\|_{L^2}$ such that
\begin{equation*}
\sup_{N\geq 1} \mbE\left[\sup_{t\in [0,\bar{T}]}\|u^N_t\|^2_{L^2} + \int_0^{\bar{T}}\|u_t^N\|^2_{H^1}\,\dd t\right]<\infty.
\end{equation*}
We can therefore apply the Banach--Alaoglu theorem, \cite[Thm. 3.16 \& Thm. 3.17]{brezis_11}, to see that there exist sub-sequences $\{u^k\}_{k\geq 1},\, \{A(u^k)\}_{k\geq 1}$, a $u \in L^2(\Omega\times [0,\bar{T}];H^1(\mbR^2))$ and a $\xi \in L^2(\Omega\times [0,\bar{T}];H^{-1}(\mbR^2))$ such that
\begin{align*}
&u^k \rightharpoonup u \in L^2(\Omega\times [0,\bar{T}];H^1(\mbR^2))\\
&A(u^k) \rightharpoonup \xi \in L^2(\Omega\times [0,\bar{T}];H^{-1}(\mbR^2)).
\end{align*}
It follows from the first and Lemma \ref{lem:BGoodProps} that the stochastic integrals converge so it remains to show that $\xi = A(u)$. From the local monotonicity of $A$, for any $t\in (0,\bar{T}]$, $v\in L^2(\Omega\times [0,\bar{T}];H^1(\mbR^2))$ and $N\geq 1$
\begin{equation}\label{eq:MonotoneApplied}
\mbE\left[\int_0^t \langle A(u^N_s)-A(v_s),u^N_s-v_s\rangle \dd s\right] \,\lesssim\, \frac{\lambda}{2}\mbE\left[\int_0^t \left(\blue{\|u^N_s-v_s\|_{L^2}}+\|u^N_s- v_s\|^2_{L^2}\right) \dd s\right].
\end{equation}
Using the identity,
\begin{equation*}
\mbE\left[ \|u^N_t\|^2_{L^2} - \|u^N_0\|^2_{L^2}\right] = \mbE\left[\int_0^t \langle A(u^N_s),u^N_s\rangle \dd s\right],
\end{equation*}
which can be proved directly using the chain rule for Stratonovich integrals and the arguments of Lemma \ref{lem:LocalAPriori}, it is straightforward to show the inequality,
\begin{align*}
\mbE\left[\int_0^t \langle \xi_s,u_s\rangle \dd s\right] \,\leq\, \liminf_{N\rightarrow \infty} \mbE\left[\int_0^t \langle A(u^N_s),u^N_s\rangle \dd s\right].
\end{align*}
It follows, applying \eqref{eq:MonotoneApplied} in the final inequality, that for any $v\in L^2(\Omega\times [0,\bar{T}];H^1(\mbR^2))$,
\begin{align*}
\mbE\left[\int_0^t \langle \xi_s-A(v_s),u_s-v_s\rangle \dd s\right] &\,\leq\, \liminf_{N\rightarrow \infty} \mbE\left[\int_0^t \langle A(u^N_s)-A(v_s),u^N_s-v_s\rangle \dd s\right]\\
&\,\lesssim\, \frac{\lambda}{2}\liminf_{N\rightarrow \infty} \mbE\left[\int_0^t\left(\blue{\|u^N_s-v_s\|_{L^2}}+\|u^N_s-v_s\|^2_{L^2}\right)\dd s\right]
\end{align*}
Now, choosing $v = u -\theta w$ for some $\theta>0$ and $w\in L^2(\Omega\times [0,\bar{T}];H^1(\mbR^2))$, gives that
\begin{equation*}
\mbE\left[\int_0^t \langle \xi_s-A(u_s-\theta w_s), w_s\rangle \dd s\right] \,\leq\, \theta\frac{\lambda}{2} \mbE\left[\int_0^t \|w_s\|^2_{L^2}\dd s\right].
\end{equation*}
So applying \eqref{eq:SPDEHemiCts} and taking $\theta\rightarrow 0$ we finally find that,
\begin{equation*}
\mbE\left[\int_0^t \langle \xi_s-A(u_s), w_s\rangle \dd s\right]\,\leq\, 0,
\end{equation*}
for all $w\in L^2(\Omega\times [0,\bar{T}];H^1(\mbR^2))$ from which it follows that $\xi=A(u_s) \in L^2(\Omega\times [0,\bar{T}];H^{-1}(\mbR^2))$.

It follows that $u \in L^2(\Omega;L^\infty([0,\bar{T}];L^2(\mbR^2))) \cap L^2(\Omega\times [0,\bar{T}];H^1(\mbR^2))$ and satisfies \eqref{eq:StratEquation}. We now show that in fact, $u \in L^2(\Omega;C([0,\bar{T}];L^2(\mbR^2))$. To see this we recall that since $L^2(\mbR^2)$ is a Hilbert space, if $u_{t_k} \rightharpoonup u_t \in L^2(\mbR^2)$, and $\|u_{t_k}\|_{L^2}\rightarrow \|u_t\|_{L^2}\in \mbR$ one has
\begin{equation*}
\|u_{t_k}-u_t\|^2_{L^2} =\langle u_t-u_{t_k},u_t-u_{t_k}\rangle = \|u_t\|^2_{L^2}-2\langle u_t,u_{t_k}\rangle + \|u_{t_k}\|_{L^2}^2 \rightarrow 0.
\end{equation*}
From \eqref{eq:AdHocIto} it follows that given a sequence $t_k\rightarrow t$, $\|u_{t_k}\|_{L^2}\rightarrow \|u_t\|_{L^2}$. So it suffices to show that $u_{t_k}\rightharpoonup u_t\in L^2(\mbR^2)$. Let $h\in L^2(\mbR^2)$ be arbitrary, $\{h_n\}_{n\geq 1}\subset H^1(\mbR^2)$ be a sequence converging to $h$ strongly in $L^2(\mbR^2)$ and $\varepsilon>0$, $n_\varepsilon\geq 1$ be large enough such that,
\begin{equation*}
\sup_{t \in [0,T]} \|u_t\|_{L^2}|\|h-h_n\|_{L^2}\,\,\leq\,
\, \frac{\varepsilon}{2}, \quad \text{for all }n\geq n_\varepsilon.
\end{equation*}
Therefore we have
\begin{align*}
|\langle u_t,h\rangle -\langle u_{t_k},h\rangle|_L^2 &\,\leq\, |\langle u_t,h-h_{n_\varepsilon}\rangle|+|\langle u_t-u_{t_k},h_{n_\varepsilon}\rangle| + |\langle u_{t_k},h-h_{n_\varepsilon}\rangle|\\
&\,\leq\, \varepsilon + \|u_{t}-u_{t_k}\|_{H^{-1}}\|h_{n_\varepsilon}\|_{H^1}.
\end{align*}
By definition, for any weak solution $u_{t_k} \rightarrow u_t$ strongly in $H^{-1}(\mbR^2)$ and so conclude
\begin{equation*}
\limsup_{n\geq n_\varepsilon} |\langle u_t,h\rangle - \langle u_{t_k},h\rangle| \,\,\leq\, \,\varepsilon.
\end{equation*}
Since $\varepsilon>0$ was arbitrary we may conclude $u_{t_k}\rightarrow u_t \in L^2(\mbR^2)$ strongly. Furthermore, inspecting the proof we see that the modulus of continuity is deterministic and hence $u\in L^p(\Omega;C([0,\bar{T}];L^2(\mbR^2)))$ for any $p\geq 1$.
\end{proof}

\paragraph{Acknowledgements}
The authors warmly thank J. Norris, A. de Bouard and L. Galeati for helpful discussions and B. Hambly for useful comments on the manuscript.

The authors would like to express their gratitude to the French Centre National de Recherche Scientifique (CNRS) for the grant (PEPS JCJC) that supported this project. M.T. was partly supported by \textit{Fondation Mathématique Jacques Hadamard}.

Work on this paper was undertaken during A.M.'s tenure as INI-Simons Post Doctoral Research Fellow hosted by the Isaac Newton Institute for Mathematical Sciences (INI) participating in programme \emph{Frontiers in Kinetic Theory}, and by the Department of Pure Mathematics and Mathematical Statistics (DPMMS) at the University of Cambridge.  This author would like to thank INI and DPMMS for support and hospitality during this fellowship, which was supported by Simons Foundation (award ID 316017) and by Engineering and Physical Sciences Research Council (EPSRC) grant number EP/R014604/1.
\bibliography{Mayorcas} 
\bibliographystyle{abbrvnat}
\end{document}